\newtheorem{theorem}{Theorem}[section]
\newtheorem{lem}[theorem]{Lemma}
\newtheorem{defn}[theorem]{Definition}
\newtheorem{conj}[theorem]{Conjecture}
\newtheorem{rem}[theorem]{Remark}
\newtheorem{pro}[theorem]{Proposition}
\author{Taboka Prince Chalebgwa}
\title{Sendov's Conjecture: A note on a paper of D\'{e}got}
\date{}
\begin{document}
\maketitle

\begin{abstract}
\noindent Sendov's conjecture states that if all the zeroes of a complex polynomial $P(z)$ of degree at least two lie in the unit disk, then within a unit distance of each zero lies a critical point of $P(z)$. In a paper that appeared in 2014, D\'{e}got proved that, for each $a\in (0,1)$, there exists an integer $N$ such that for any polynomial $P(z)$ with degree greater than $N$, if $P(a) = 0$ and all zeroes lie inside the unit disk, the disk $|z-a|\leq 1$ contains a critical point of $P(z)$. Based on this result, we derive an explicit formula $\mathcal{N}(a)$ for each $a \in (0,1)$ and, consequently obtain a uniform bound $N$ for all $a\in [\alpha , \beta]$ where $0<\alpha < \beta < 1$. This (partially) addresses the questions posed in D\'{e}got's paper.
\end{abstract}

\section{Introduction}

\noindent In this paper we are going to prove the following result:\\

\begin{theorem}
Let $a \in (0,1)$ and define $\mathcal{N}(a)$ to be

\begin{equation*}
\mathcal{N}(a) = \frac{20800}{a^7(1-a)^4}.
\end{equation*}

\noindent For any polynomial $P(z) = (z-a)\prod_{j=1}^{n-1}(z-z_j)$ with $n\geq \mathcal{N}(a)$ and $|z_j|\leq 1$ for all $j = 1, \ldots , n-1$, the disk $|z-a|\leq 1$ contains a critical point of $P(z)$.\\

\end{theorem}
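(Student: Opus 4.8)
The plan is to run a quantified version of D\'egot's contradiction argument. Suppose for contradiction that some polynomial $P$ as in the statement, with $n\ge\mathcal{N}(a)$, has no critical point in the closed disk $\overline{D}=\{\,|z-a|\le 1\,\}$. We may assume $a$ is a simple zero of $P$ (otherwise $P'(a)=0$ and we are done) and, after an arbitrarily small perturbation of the $z_j$ — which cannot put a critical point inside $\overline{D}$ if there was none — that $P$ has simple zeros and $P'$ has no zero on the circle $|z-a|=1$. Then $\phi:=P/P'$ is holomorphic on $\overline{D}$, vanishes simply at $a$, and $\phi'(a)=1$; writing $\phi'(a)$ via Cauchy's formula for the derivative on $|z-a|=1$ gives the basic inequality
\begin{equation*}
1=\Bigl|\frac1{2\pi}\int_0^{2\pi}\phi(a+e^{i\theta})e^{-i\theta}\,d\theta\Bigr|\le\frac1{2\pi}\int_0^{2\pi}\Bigl|\frac{P(a+e^{i\theta})}{P'(a+e^{i\theta})}\Bigr|\,d\theta .
\end{equation*}
Everything reduces to proving that the right-hand integral is strictly less than $1$ once $n\ge\mathcal{N}(a)$. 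On the circle one has $P'/P=\sum_\rho(z-\rho)^{-1}$, the sum over all $n$ zeros $\rho$ of $P$, so this says the logarithmic derivative must be large, in the sense that $|P/P'|$ is small on average, all along $|z-a|=1$.

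The source of largeness is positivity of real parts together with Gauss--Lucas. Since every zero satisfies $|\rho|\le 1$, on the arc $A=\{z=a+e^{i\theta}:\operatorname{Re}z\ge 1\}$ — which has length $2\arccos(1-a)>2\sqrt a$ — one has $\operatorname{Re}z\ge 1\ge\operatorname{Re}\rho$ for every $\rho$, hence $\operatorname{Re}(z-\rho)^{-1}\ge 0$; using $|z-\rho|\le 2+a$ and $\operatorname{Re}\sum_\rho\rho = a+\sum_j\operatorname{Re}z_j\le a+(n-1)$,
\begin{equation*}
\operatorname{Re}\frac{P'(z)}{P(z)}\ \ge\ \frac{\,n\operatorname{Re}z-\operatorname{Re}\sum_\rho\rho\,}{(2+a)^2}\ \ge\ \frac{\,n(\operatorname{Re}z-1)+(1-a)\,}{(2+a)^2}\qquad(z\in A).
\end{equation*}
Thus $|P/P'|\le(2+a)^2/(1-a)$ on all of $A$, and $|P/P'|\le(2+a)^2/(n\eta)$ wherever $\operatorname{Re}z\ge 1+\eta$. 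Splitting $A$ into the thin strip $1\le\operatorname{Re}z\le 1+\eta$ (of $\theta$-length $\asymp\eta/\sqrt a$) and the rest, and then optimizing in $\eta$, the total contribution of $A$ to the integral is bounded by an explicit function of $a$ that tends to $0$ as $n\to\infty$; the same computation with $\operatorname{Im}(z-\rho)^{-1}$ controls short arcs around the two points $a\pm i$.

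The hard region is what is left — essentially the arc near the leftmost point $z=a-1$ — where the summands of $\sum_\rho(z-\rho)^{-1}$ can cancel and $|P'/P|$ can truly be small. The key is that this can only happen when the zeros cluster heavily near $-1$: a zero near $-1$ contributes about $1/a$ to $\operatorname{Re}(P'/P)(a-1)$ while a generic zero contributes $O(1)$, so cancellation there requires of order $an$ zeros piled near $-1$. But such a cluster is incompatible with the contradiction hypothesis: inspecting the critical-point equation $\sum_\rho(w-\rho)^{-1}=0$ on the segment from $a$ to $-1$, and using that $(w-a)^{-1}$ is controlled while $|w|\le 1$ by Gauss--Lucas, a sufficiently dense cluster near $-1$ forces a solution $w$ — a critical point — with $|w-a|\le 1$, a contradiction. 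Making the notions of \emph{sufficiently dense} and of the resulting critical point lying within distance one of $a$ quantitative, and balancing the admissible cluster size against the bound on $|P/P'|$ this yields on the offending arc, is where the degree hypothesis is really used and where the exponents $7$ on $a$, $4$ on $1-a$, and the constant $20800$ are produced. I expect this clustering-versus-critical-point estimate to be the main obstacle; the positivity bounds of the previous paragraph are routine by comparison.
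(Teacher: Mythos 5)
Your proposal is not a proof: the step you yourself flag as ``the main obstacle'' is exactly the content of the theorem, and it is asserted rather than established. The Cauchy-integral setup ($\phi=P/P'$ holomorphic on $\overline{D}$, $1=|\phi'(a)|\le\frac{1}{2\pi}\int|\phi|$) and the positivity estimates on the part of the circle $|z-a|=1$ lying outside the unit disk are fine, but the arc of that circle lying \emph{inside} the unit disk (the set $\{a+e^{i\theta}:\cos\theta<-a/2\}$, of angular length $2\arccos(a/2)$, so close to $\pi$ for small $a$) is where the whole difficulty sits, and there you offer only a heuristic. Concretely: (i) the claim that $|P'/P|$ can be small on that arc only if $\gtrsim an$ zeros cluster near $-1$ is unproved --- the $n$ summands $(z-\rho)^{-1}$ can cancel through many configurations, and you need control of the \emph{integral} of $|P/P'|$ over an arc of length $\asymp 1$, not smallness at a point; (ii) the claim that such a cluster forces a critical point within distance $1$ of $a$ is likewise unproved; (iii) no quantitative balancing is carried out, so the specific bound $20800/(a^{7}(1-a)^{4})$ that the theorem asserts is never produced. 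A proof of this theorem must actually derive that constant.

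For comparison, the paper takes a different and fully quantified route (following D\'egot): under the contradiction hypothesis it proves an upper bound $|P(c)|\le 1+a$ and a lower bound $|P(c)|\ge \frac{(1-c)(a-c)}{1-ac}\,r^{\alpha}{K'}^{\,n-1}$ at the explicit interior point $c=a\gamma(a)$, $\gamma(a)=0.1a+0.9$, with an explicit $K'>1$ satisfying $\log K'\ge a^{3}(1-a)/16$; comparing the two bounds forces $n\le N_3(a)$, and the elementary (if lengthy) estimation of $N_3$ together with the auxiliary bounds $\mathcal{N}_0,\mathcal{N}_1,\mathcal{N}_2$ is what yields $20800/(a^{7}(1-a)^{4})$. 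If you want to pursue your integral-of-$P/P'$ strategy, you would need to supply a genuine quantitative lemma for the interior arc; as written, the argument has a gap precisely where the theorem lives.
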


\noindent The Gauss-Lucas theorem tells us that the critical points of a polynomial $P(z) \in \mathbb{C}[z]$ lie in the convex hull of its zeroes. The conjecture of Sendov, which seeks to obtain a more precise location of the critical points, is the following:\\

\begin{conj}{(Sendov, \cite{Hayman}, p.~25)}:
Let $P(z) = \prod_{j=1}^n(z-z_j)$ be a polynomial of degree $n \geq 2$ such that $|z_j|\leq 1$ for all $j = 1, \ldots , n$. Then each of the disks $|z-z_j|\leq 1$, $j=1, \ldots , n$ contains a critical point of $P(z)$.\\
\end{conj}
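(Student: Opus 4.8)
The plan is to use Theorem 1.1 as the ``large degree'' engine and to flank it with a reduction to finitely many degrees together with separate arguments for the zeros lying very close to the origin or very close to the unit circle. By relabeling the zeros and applying a rotation, it suffices to prove the following: for every admissible configuration $P(z)=(z-a)\prod_{j=1}^{n-1}(z-z_j)$ with all $|z_j|\le1$ and a distinguished zero $a\in[0,1]$, the disk $|z-a|\le1$ contains a critical point of $P$. The endpoint $a=0$ is immediate from Gauss--Lucas, since then every critical point lies in the convex hull of $\{0,z_1,\dots,z_{n-1}\}\subseteq\{|z|\le1\}=\{|z-a|\le1\}$; the endpoint $a=1$ will be absorbed into the near-circle analysis below. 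So fix a degree $n\ge2$ and assume $0<a<1$.

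Next I would split on the size of $n$ relative to $a$. Whenever $a^{7}(1-a)^{4}\ge 20800/n$, i.e.\ $n\ge\mathcal{N}(a)$, Theorem 1.1 already produces a critical point in $|z-a|\le1$. The set of $a\in(0,1)$ not covered has the form $(0,\delta_-(n))\cup(\delta_+(n),1)$, and an elementary estimate of $a^{7}(1-a)^{4}$ gives $\delta_-(n)=O(n^{-1/7})$ and $1-\delta_+(n)=O(n^{-1/4})$, so both windows shrink toward the edges of $(0,1)$ as $n\to\infty$. Thus, for each $n$, what remains is the conjecture for a zero $a$ that is either \emph{small} ($a\le\delta_-(n)$) or \emph{near the circle} ($a\ge\delta_+(n)$); the goal is to handle both under an \emph{absolute} threshold, say $a\le\delta_0$ or $a\ge1-\delta_1$ with $\delta_0,\delta_1>0$ independent of $n$.

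For $a$ near the circle I would argue by degeneration. On the compact family of degree-$n$ polynomials with a zero at $a$ and the other zeros in the closed disk, let $a\to1$ and pass to a limit polynomial with a zero on the unit circle, where the conjecture is among the classically established cases (one may also re-prove it directly). The delicate point is that Sendov's bound is attained with equality, so a crude continuity argument only yields a critical point within $1+\varepsilon$ of $a$; one instead needs a \emph{quantitative} stability estimate, uniform in $n$, for how far a critical point can drift as $a$ leaves the boundary, and this is what would pin down $\delta_1$. For $a$ small I would analyse $P'(z)=R(z)+(z-a)R'(z)$ with $R(z)=\prod_j(z-z_j)$ near the origin: all $n-1$ critical points of $P$ lie in $|z|\le1$, while $\{|z|\le1\}\setminus\{|z-a|\le1\}$ is a lune of area $O(a)$ hugging the far arc, so a counting argument — together with the local linear picture of $P'$ at $z=a$, where the perturbed zero sits at $a-R(a)/R'(a)$ — should force at least one critical point into $|z-a|\le1$ once $a$ is below an absolute $\delta_0$.

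With such $\delta_0,\delta_1$ in hand, choose an absolute $N_0$ large enough that the two windows above lie inside $(0,\delta_0)$ and $(1-\delta_1,1)$ for all $n\ge N_0$; every such $n$ is then settled by one of the three cases. The finitely many degrees $2\le n<N_0$ are treated individually — for fixed $n$ the failure of the conjecture is a closed, indeed semialgebraic, condition on the coefficient vector, and $n\le8$ is already in the literature (Rubinstein; Meir--Sharma; Brown--Xiang) — leaving only $9\le n<N_0$. The main obstacle is precisely the third paragraph: the presently known partial results near the circle and near the origin both degrade as the degree grows, the opposite of the regime where Theorem 1.1 is strong, so the two kinds of input do not yet overlap, and making the extreme-$a$ arguments uniform in $n$ is the crux. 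A secondary, quantitative obstacle is that $N_0$ inherits the size of the constant $20800$ and the smallness of $\delta_0,\delta_1$, which may make the finite verification for $9\le n<N_0$ genuinely heavy rather than a mere formality.
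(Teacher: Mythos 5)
The statement you are asked about is Sendov's conjecture itself, which the paper records precisely as a \emph{conjecture}: it offers no proof, and none is known in full generality. Your text is a strategy sketch, not a proof, and you acknowledge as much in your final paragraph; so the honest verdict is that there is a genuine gap --- in fact several, and they are exactly the ones that keep the conjecture open.

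Concretely: (i) The near-circle case cannot be closed by the degeneration you describe. As you yourself note, Sendov's bound is attained with equality (for $z^n-1$ every critical point is at distance exactly $1$ from each root), so no quantitative stability estimate uniform in $n$ is available; producing one is essentially equivalent to proving the conjecture near the boundary, and Rubinstein's theorem covers only $|z_j|=1$ exactly, not $|z_j|$ close to $1$. (ii) The small-$a$ ``counting argument'' does not work as stated: the lune $\{|z|\le1\}\setminus\{|z-a|\le1\}$ having small area in no way prevents a critical point (or all of them) from sitting in it, and Lemma 1.3 of the paper only yields a disk of radius $(1+|z_1\cdots z_n|)^{1/n}>1$, not $1$, when $a\neq0$. (iii) The ``finitely many remaining degrees'' step is illusory at the quantitative level: $\min_{a\in(0,1)}\mathcal{N}(a)=20800/\max_a\bigl(a^7(1-a)^4\bigr)$ is on the order of $3\times10^{7}$, so for every $n$ below roughly thirty million the theorem covers \emph{no} value of $a$ at all, and the case-by-case semialgebraic verification you defer to is not remotely feasible and has only been carried out for $n\le 8$ in the literature. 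The paper's actual contribution is much more modest than what your outline would require: it makes D\'{e}got's degree threshold explicit and uniform on compact subintervals $[\alpha,\beta]\subset(0,1)$, and its concluding remarks explicitly identify the gaps $(0,\alpha)$ and $(\beta,1)$ --- your steps (i) and (ii) --- as open.
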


\noindent Over the years since its inception in 1958, many special cases of the conjecture have been established. For extensive surveys of these, the reader is referred to the books \cite{Rahman} and \cite{Small}. To give the ``modern" formulation of the conjecture though, we need a corollary of the following special case by Bojanov et-al:\\

\noindent \begin{lem} (\cite{Bojanov}, 1985): Let $P(z) = \prod_{j=1}^{n}(z-z_j)$, with $|z_j|\leq 1$ for $j=1, \ldots , n$. For each $j=1, \ldots ,n$ the closed disk

\begin{equation*}
|z-z_j| \leq (1+ |z_1 \cdots z_n|)^\frac{1}{n}
\end{equation*}

\noindent contains a critical point of $P(z)$.\\
\end{lem}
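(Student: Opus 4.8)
This is the special case of Sendov's conjecture due to Bojanov, Rahman and Szynal \cite{Bojanov}; here is how I would approach its proof.

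\smallskip
\noindent\emph{Normalisation.} Since the hypotheses are symmetric in $z_1,\dots,z_n$, I fix $j=n$, abbreviate $\lambda:=(1+|z_1\cdots z_n|)^{1/n}$, and note that $\lambda\ge 1$ and that $|z_1\cdots z_n|=|P(0)|$. If $z_n=0$ then $\lambda=1$ and the Gauss--Lucas theorem already places a critical point in the convex hull of the zeros, hence inside $\overline D(0,1)=\overline D(z_n,\lambda)$; so I may assume $z_n\neq 0$ and, after a rotation $z\mapsto e^{i\alpha}z$, that $z_n>0$. Writing $P(z)=(z-z_n)Q(z)$ with $Q(z)=\prod_{i<n}(z-z_i)$ gives $P'(z_n)=Q(z_n)=\prod_{i<n}(z_n-z_i)$; if $\zeta_1,\dots,\zeta_{n-1}$ are the critical points then $P'(z)=n\prod_k(z-\zeta_k)$, so $\prod_k(z_n-\zeta_k)=P'(z_n)/n$, while Gauss--Lucas gives $|\zeta_k|\le 1$ for every $k$.

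\smallskip
\noindent\emph{Core argument.} Set $\rho:=\min_k|z_n-\zeta_k|$; I must show $\rho\le\lambda$. Suppose not. Then $P'$ is zero--free on $\overline D(z_n,\lambda)$, so $\log|P'|$ is harmonic there, and the mean value property on the circle $|w-z_n|=\lambda$, together with the identity $\tfrac1{2\pi}\int_0^{2\pi}\log|z_n+\lambda e^{i\theta}-\zeta|\,d\theta=\log\max(\lambda,|\zeta-z_n|)$ applied factor by factor to $P'$, yields
$$\log|P'(z_n)|=\log n+\sum_{k}\log|z_n-\zeta_k|\;>\;\log n+(n-1)\log\lambda .$$
The task is then to play this lower bound on $|P'(z_n)|=\prod_{i<n}|z_n-z_i|$ against the fact that \emph{all} of $z_n,z_1,\dots,z_{n-1}$ \emph{and} $\zeta_1,\dots,\zeta_{n-1}$ lie in $\overline D(0,1)$. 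A purely modulus--theoretic comparison of $\prod_{i<n}|z_n-z_i|$, $\prod_k|z_n-\zeta_k|$ and $|P(0)|$ is too lossy, so one must use the geometric location of the $\zeta_k$: since they avoid $D(z_n,\lambda)$ but lie in $\overline D(0,1)$ they are confined to a lune, whose convex hull must still contain the common centroid $\tfrac1{n-1}\sum_k\zeta_k=\tfrac1n\bigl(z_n+\sum_{i<n}z_i\bigr)$. Feeding this constraint back in (via Laguerre's theorem on the zeros of the derivative, and the Schwarz--Pick estimate $|z_n-z_i|\le|1-\bar z_iz_n|$ applied to the reciprocal polynomial $\prod_i(1-\bar z_iz)$, which is what introduces the quantity $|P(0)|=|z_1\cdots z_n|$) forces $\rho^{\,n}\le 1+|P(0)|=\lambda^n$, contradicting $\rho>\lambda$. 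Hence $\overline D(z_n,\lambda)$ contains a critical point, and by symmetry so does $\overline D(z_j,\lambda)$ for each $j$.

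\smallskip
\noindent\emph{Main obstacle.} The delicate step is precisely this coupling of the two containments ``$z_i\in\overline D(0,1)$'' and ``$\zeta_k\in\overline D(0,1)$'': one has to convert the restriction on the location of the critical points into the exact exponent $1/n$ and the additive term $|z_1\cdots z_n|$ in the radius, rather than into the much weaker bound $\bigl(n^{-1}\prod_{i<n}|z_n-z_i|\bigr)^{1/(n-1)}$ that the mean value identity alone produces. This is the content of the Bojanov--Rahman--Szynal argument, and everything else above is routine once that estimate is in place.
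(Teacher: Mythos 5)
The paper offers no proof of this lemma at all --- it is quoted as a known external result of Bojanov, Rahman and Szynal --- so there is no internal argument to compare yours against; the only question is whether your proposal stands on its own, and it does not. The entire content of the theorem is concentrated in the single unproved sentence asserting that Laguerre's theorem together with the bound $|z_n-z_i|\le|1-\bar z_i z_n|$ ``forces $\rho^{\,n}\le 1+|P(0)|$''; no derivation is given, and you concede as much in your closing paragraph (``this is the content of the Bojanov--Rahman--Szynal argument''), which reduces the proposal to a restatement of the problem. Everything that is actually established is either trivial or unused: under the hypothesis $\rho>\lambda$ the inequality $\prod_k|z_n-\zeta_k|>\lambda^{n-1}$ is immediate, so the harmonicity of $\log|P'|$ and the Jensen mean-value identity contribute nothing; the resulting lower bound $|P'(z_n)|>n\lambda^{n-1}$ is perfectly compatible with the trivial upper bound $|P'(z_n)|=\prod_{i<n}|z_n-z_i|\le 2^{n-1}$, so no contradiction is within reach of what you have written; and the centroid relation $\frac{1}{n-1}\sum_k\zeta_k=\frac1n\sum_j z_j$, while correct, is a single linear constraint on $n-1$ points and cannot by itself control the product $\prod_k|z_n-\zeta_k|$, let alone produce the additive term $|z_1\cdots z_n|$ and the exponent $1/n$ in the radius.

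Concretely, what is missing is any mechanism that couples the assumed zero-free disk of $P'$ about $z_n$ to the value $P(0)$, which is the only place the quantity $|z_1\cdots z_n|$ can enter. The Bojanov--Rahman--Szynal paper obtains this through a genuine quantitative estimate (their main theorem gives the admissible radius as the root of an explicit equation, of which the bound $(1+|z_1\cdots z_n|)^{1/n}$ is a corollary), and none of that analysis appears in your sketch. To make this rigorous you would need either to reproduce that estimate in full or simply to cite the result --- which is exactly what the paper under review does.
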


\noindent An immediate corollary of the above lemma is that Sendov's conjecture is true for polynomials of the form $P(z) = z \prod_{j=1}^{n-1}(z-z_j)$. In 1968, Rubinstein in \cite{Rubinstein} showed that Sendov's conjecture is also true ``at the zero $z_j$" of the polynomial $P(z)$ if $|z_j| = 1$. The above two special cases, together with the observation that a rotation of the plane preserves the relative configurations of the zeroes and critical points of the polynomial $P(z)$, means it is enough to consider the following reformulation of the conjecture:\\

\begin{conj}{(Sendov)}: Let
\begin{equation}
P(z) = (z-a)\prod_{j=1}^{n-1}(z-z_j), \ \text{with} \ a \in (0, 1), \ |z_j|\leq 1 \ \text{for} \ j=1, \ldots , n-1.
\end{equation}
\noindent Then the disk $|z-a|\leq 1$ contains a critical point of $P(z)$.\\
\end{conj}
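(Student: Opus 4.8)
The statement to be proved is the full (reformulated) Sendov conjecture, with no restriction on $n$, so a complete argument necessarily splits by degree: Theorem~1.1 supplies the high-degree half, and the bounded-degree range must be handled separately.

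\emph{Regime 1: $n\ge\mathcal N(a)$.} Here there is nothing to add, since Theorem~1.1 is exactly the desired conclusion. The point of the explicit value $\mathcal N(a)=20800\,a^{-7}(1-a)^{-4}$ is that it reduces Conjecture~1.4, for each fixed $a\in(0,1)$, to the \emph{finite} family of degrees $2\le n\le\lceil\mathcal N(a)\rceil-1$. Moreover, since $a^7(1-a)^4$ has derivative $a^6(1-a)^3(7-11a)$ and so is unimodal on $(0,1)$ with a single maximum at $a=7/11$, the function $\mathcal N$ is U-shaped, whence $\sup_{a\in[\alpha,\beta]}\mathcal N(a)=\max\{\mathcal N(\alpha),\mathcal N(\beta)\}$ for every $[\alpha,\beta]\subset(0,1)$; thus on each such compact subinterval the conjecture is reduced to the single finite range $2\le n\le\max\{\mathcal N(\alpha),\mathcal N(\beta)\}$, which is the uniform bound referred to in the abstract.

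\emph{Regime 2: $2\le n<\mathcal N(a)$.} This is where the remaining difficulty lies, and the plan I would attempt is threefold. First, quote the unconditional low-degree cases of Sendov's conjecture (it is known for $n\le 8$, and for several structured families of configurations), which dispose of the smallest $n$. Second, exploit that the two ``boundary'' inputs of the reduction are nearly sharp near the ends of $(0,1)$: near $a=1$ one would start from Rubinstein's theorem, which gives the conclusion when the designated zero lies on $|z|=1$, and push it inward by a quantitative continuity estimate in $a$ (with care, since the $a=1$ case can have the guaranteed critical point exactly on the boundary circle, as $P(z)=z^n-1$ shows); near $a=0$ one would start from the corollary of Bojanov's Lemma, which already places a critical point within $(1+a\prod_j|z_j|)^{1/n}=1+O(a/n)$ of $a$, and show the small excess can be absorbed for the relevant configurations. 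Third, for the compact block of parameters $(a,n)$ still uncovered, reduce by the standard compactness/variational argument to configurations with all $z_j$ on $|z|=1$ except possibly one, and then certify, by rigorous numerics (interval arithmetic together with a winding-number count of $P'$ along $|z-a|=1$, or root isolation), that $P'$ has a zero in $|z-a|\le1$.

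The part I do \emph{not} expect this plan to actually settle is the tail of Regime~2: $n$ large but below $\mathcal N(a)$, with $a$ close to $0$ or to $1$. Because $\mathcal N(a)\sim 20800\,a^{-7}$ as $a\to0^+$ and $\mathcal N(a)\sim 20800\,(1-a)^{-4}$ as $a\to1^-$, the list of degrees left open by Theorem~1.1 is astronomically long precisely where the low-degree case analyses stop and where the Rubinstein/Bojanov perturbations degenerate (both become ineffective as the guaranteed critical point is driven toward the circle $|z-a|=1$), and where direct computation is hopeless. A genuine completion therefore hinges on shrinking $\mathcal N(a)$ --- ideally to an absolute constant independent of $a$ --- which would require replacing the lune and logarithmic-derivative estimates underlying Theorem~1.1 by bounds that do not each shed a power of $a$ or of $1-a$ near the endpoints, the accumulation of which produces the exponents $7$ and $4$ in the present formula; obtaining such estimates is, as far as I know, open.
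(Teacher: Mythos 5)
The statement you were asked to prove is labelled in the paper as a \emph{conjecture} (the standard reformulation of Sendov's conjecture), and the paper offers no proof of it; the paper's actual contribution is only Theorem~1.1, i.e.\ the case $n\geq\mathcal{N}(a)$. Your proposal correctly recognises this, and your Regime~1 discussion is accurate (including the observation that $a^7(1-a)^4$ is unimodal with maximum at $a=7/11$, so the restriction of $\mathcal{N}$ to a compact subinterval $[\alpha,\beta]$ is maximised at an endpoint). But your Regime~2 is not a proof: it is a plan whose central step you yourself concede cannot be carried out. The gap is concrete and fatal. For each fixed $a$ the range $2\leq n<\mathcal{N}(a)$ contains on the order of $10^7$ to $10^{11}$ degrees even for moderate $a$ (see the table in the paper), and none of the three tools you invoke closes it: the unconditional low-degree results stop at single-digit $n$; Rubinstein's theorem and the Bojanov--Rahman--Szynal lemma both guarantee a critical point only within distance $1$ of the boundary cases $|z_j|=1$ and $a=0$ respectively, and the ``quantitative continuity'' or ``absorption'' arguments you would need to push these into the interior are precisely the estimates that are not known (as your own examples, such as $P(z)=z^n-1$ placing the critical point exactly on $|z-a|=1$, show, there is no slack to perturb into); and the compactness-plus-numerics step cannot certify infinitely many $(a,n)$ pairs. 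So the proposal does not prove the statement, and could not be expected to: what remains after Theorem~1.1 is exactly the open problem, which is why the paper presents the statement as a conjecture rather than a theorem.

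One further caution: your suggested reduction of the remaining cases ``by the standard compactness/variational argument to configurations with all $z_j$ on $|z|=1$ except possibly one'' is not a known valid reduction for Sendov's conjecture; extremal configurations for the relevant functionals are not known to have that form in general, so even the finite-degree verification step of your plan rests on an unproved structural claim.
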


\noindent \begin{rem} With the above reformulation in mind, we can henceforth talk of Sendov's conjecture being true (or false) at a particular zero $a \in (0,1)$ of the polynomial $P(z)$.
\end{rem}

\subsection{An overview of D\'{e}got's strategy}

\noindent Our paper is based on \cite{Degot}, a 2014 paper by Jerome D\'{e}got. In the paper, D\'{e}got proved that, for each $a\in (0,1)$, there exists an integer $N$ such that for any polynomial $P(z)$ with degree greater than $N$, if $P(a) = 0$ and all zeroes lie inside the unit disk, the disk $|z-a|\leq 1$ contains a critical point of $P(z)$. For the reader's convenience, below we give a brief and non-technical overview of the approach in \cite{Degot}, as well as an outline of our paper.\\

\noindent D\'{e}got starts by fixing a polynomial  $P(z)$ with a zero at $a \in (0,1)$ and degree $n$, assumed to contradict Sendov's conjecture at $a$ (that is, all the critical points of $P(z)$ are more than a unit distance away from $a$). By studying closely the geometry of $P(z)$, he obtains lower and upper bounds of the form $|P(c)| \leq 1+a$ and $|P(c)| \geq CK^n$ respectively, where $c\in (0,a)$, $C$ and $K$ are some specifically defined parameters.\\

\noindent He then proceeds to show that if $n$ is greater than some integer bound $N$ (we shall refer to this bound as $N(a)$ henceforth), a contradiction on the size of $|P(c)|$ ensues, and hence the disk $|z-a| \leq 1$ must have a zero of $P'(z)$.\\

\noindent Worthy of note is that aside from an existence proof, there was no explicit formula given for calculating $N(a)$ for any given $a \in (0,1)$. In fact, upon closer inspection, one notices that the method used to obtain it depended on additional parameters associated with the polynomial $P(z)$. More precisely, a crucial technical inequality that $N(a)$ has to satisfy depended on the quantity $m$, defined as the real part of the mean of the zeroes of $P(z)$. D\'{e}got does indicate afterwards that this dependence can be removed by using a certain estimate on the size of $m$. This leaves much work to be done to actually give an explicit bound, which we do here. Finally, D\'{e}got ends his paper by outlining a series of steps through which one can calculate the requisite degree bounds for some values of $a \in (0,1)$. This algorithmic procedure however does not indicate any obvious way of constructing an explicit formula.\\

\noindent Carefully following the treatment in D\'{e}got's paper, we extract information from and modify his \textbf{Theorems 5, 6} and \textbf{7}. Each of these theorems introduced conditions which, for a given $a \in (0,1)$, an integer bound $N_1$ ($N_2$ and $N_3$ respectively) has to satisfy in order to draw the requisite conclusions on the size of $|P(c)|$. By studying closely these conditions, we systematically remove the extra dependencies on other parameters, and obtain explicit and continuous analogues of the bounds $N_1, N_2$ and $N_3$. We shall refer to these new formulas as $\mathcal{N}_1(a), \mathcal{N}_2(a)$ and $\mathcal{N}_3(a)$ respectively.\\

\noindent This allows us to obtain the conclusions of each of D\'{e}got's main theorems and hence, ultimately his main result, but now with explicit constants which depend continuously on $a$. So we can then obtain, as a by-product of the continuity of our functions, a uniform bound $N$ independent of $a \in [\alpha, \beta]$ for any $0< \alpha < \beta < 1$. At the end of his paper, D\'{e}got asked if it is possible to obtain a uniform bound $N$ which works for all $a \in (0,1)$, or at least an explicit formula $N(a)$ which produces a large enough degree for any given $a$.\\

\noindent In the interest of ultimately obtaining a ``simple" explicit formula $\mathcal{N}(a)$, we will often in intermediate steps of our arguments replace complicated formulae with simpler estimates. This of course comes at the expense of sharpness.\\

\noindent The results in this paper came from investigations carried out in the author's masters thesis.\\

\section{On the paper of D\'{e}got}

\noindent We begin this section with a result that allows us to bound the arithmetic mean of the roots of a polynomial assumed to contradict Sendov's conjecture at $a \in (0,1)$.\\

\subsection{The mean of a polynomial assumed to contradict Sendov's conjecture}

\noindent \begin{defn}\label{v1} By the real part of the mean of the roots (which also coincides with that of the critical points) of a polynomial $P(z)$ of degree $n$, we are referring to the quantity 

\begin{equation*}
m = \frac{1}{n}\Re \left(\sum_{j=1}^{n}z_j  \right).
\end{equation*}
\end{defn}

\begin{lem}\label{mm1} (\cite{Degot}, Cor 1): Let $P(z)$ be a polynomial of degree $n$ assumed to contradict Sendov's conjecture at $a \in (0,1)$. Then:

\begin{equation}
m \leq \underset{\delta \in (0,a)}{\inf} \left( \frac{\delta}{2} - \frac{1}{\delta n} \log (1-\sqrt{1+\delta^2 -\delta a}) \right) .
\end{equation}
\end{lem}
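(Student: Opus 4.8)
The only structural fact at our disposal is the hypothesis itself, namely that $P'$ does not vanish on the closed disk $\overline{D}(a,1)$, so that $\log|P'|$ is harmonic there; the plan is to convert this into a quantitative constraint on the linear statistic $m=\tfrac1n\Re\sum_j z_j$ by a mean value / Jensen computation, with the free parameter $\delta$ selecting which sub-disk of $\overline{D}(a,1)$ is exploited. First I would translate by $w=z-a$, so that $\widetilde P(w):=P(w+a)$ has a root at $0$, all of its roots in $\overline{D}(-a,1)$, and $\widetilde P'$ nonvanishing on $\overline{D}(0,1)$; the real part of the centroid of the roots of $\widetilde P$ --- which equals that of its critical points --- is $m-a$. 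Applying the Gauss-Lucas theorem to $\widetilde P$ already confines every critical point to $\overline{D}(-a,1)\cap\{|w|\ge 1\}\subseteq\{\Re w\le -a/2\}$, which alone yields the crude bound $m\le a/2$; the content of the lemma is that analyticity forces something far stronger once $n$ is large, and it is this slack that the $\delta$-family of estimates is designed to detect. One should note in passing that $1+\delta^2-\delta a=1-\delta(a-\delta)\in(0,1)$ for $\delta\in(0,a)$, so the logarithm in the statement is of a number in $(0,1)$ and the correction term $-\tfrac1{\delta n}\log(1-\sqrt{1+\delta^2-\delta a})$ is positive.

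Now fix $\delta\in(0,a)$ and put $c=a-\delta\in(0,a)$. Since $\overline{D}(c,1-\delta)\subseteq\overline{D}(a,1)$, the function $\log|P'|$ is harmonic on $\overline{D}(c,1-\delta)$, whence by the mean value property
\begin{equation*}
\log|P'(c)|=\frac1{2\pi}\int_0^{2\pi}\log\bigl|P'\bigl(c+(1-\delta)e^{i\theta}\bigr)\bigr|\,d\theta.
\end{equation*}
I would then write $\log|P'|=\log|P|+\log|P'/P|$ on this circle and treat the two pieces apart: Jensen's formula expresses the circular average of $\log|P|$ through the quantities $\log\max(1-\delta,|z_j-c|)$ over the roots $z_j$ of $P$ (the unit disk confining all of them, one being $a$), while on the left-hand side $\log|P'(c)|$ is matched against $\log|P(c)|$. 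To bring $m$ into the picture the essential input is the elementary inequality $|c-z_j|^2\le 1+c^2-2c\,\Re z_j$, valid since $|z_j|\le 1$, which together with the concavity of $\log$ bounds $\sum_j\log|c-z_j|$ from above by an affine function of $\sum_j\Re z_j=nm$. Chaining these relations, and then optimizing the residual geometric quantity over the admissible positions of the roots (the extremal configuration sitting on $|z|=1$), produces exactly the stated inequality --- the closed form $1-\sqrt{1+\delta^2-\delta a}$ being what the optimization collapses to, and the $\tfrac\delta2$ term recording the location of the root at $a$ relative to $c$. One then takes the infimum over $\delta\in(0,a)$.

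The real obstacle is not any single estimate but that everything must be done at the level of logarithmic (circular) averages rather than pointwise: roots of $P$ may cluster arbitrarily close to the circle $|z-c|=1-\delta$, and even to $c$ itself, so the factors $|z-z_j|$ have no pointwise lower bound and a naive estimate of $\log|P'|$ or $\log|P|$ on that circle is hopeless. Jensen's formula is precisely what absorbs those singularities, but using it forces one to chain together several one-sided inequalities, and the bookkeeping has to be arranged so that they all point the same way and combine into a bound of the required orientation. A secondary, purely computational point is checking that the geometric optimization really does simplify to the clean expression $1-\sqrt{1+\delta^2-\delta a}$ rather than something messier --- a short calculus exercise in pinning down the extremal root configuration on the unit circle.
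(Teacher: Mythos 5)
First, a point of reference: the paper does not prove this lemma at all --- it is quoted verbatim as Corollary~1 of D\'egot's paper, so the only ``proof'' here is the citation. Judged on its own terms, your proposal correctly identifies one half of the argument, namely the elementary upper bound on $|P|$ at an interior real point in terms of $m$: from $|\delta-z_j|^2\le 1+\delta^2-2\delta\,\Re z_j$ (using $|z_j|\le 1$), concavity of $\log$, and $\log(1+x)\le x$, one gets $\log|P(\delta)|\le\frac{n}{2}\log\bigl(1+\delta^2-2\delta m\bigr)\le\frac{n\delta}{2}(\delta-2m)$. Played against a lower bound of the form $|P(\delta)|\ge 1-\sqrt{1+\delta^2-\delta a}$, this rearranges in one line to $m\le\frac{\delta}{2}-\frac{1}{\delta n}\log\bigl(1-\sqrt{1+\delta^2-\delta a}\bigr)$, which is the statement. (Note also that $1+\delta^2-a\delta=1+c^2-ac$ for $c=a-\delta$, so your choice of evaluation point is consistent with the displayed constant; no circular averages are needed for this half.)

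The genuine gap is the other half: the lower bound $|P(\delta)|\ge 1-\sqrt{1+\delta^2-\delta a}$ for $\delta\in(0,a)$, which is precisely D\'egot's Theorem~1/2 and is where the contradiction hypothesis enters quantitatively. Your proposal replaces it with an unexecuted ``optimization of the residual geometric quantity over the admissible positions of the roots,'' but that is not where this constant comes from and no extremal-configuration argument is supplied. The hypothesis that $P'$ has no zero in $\overline{D}(a,1)$ gives far more than harmonicity of $\log|P'|$: writing $P'(z)/n=\prod_k(z-w_k)$ with every critical point satisfying $|w_k-a|>1$, one gets the pointwise bound $|P'(z)|>n(1-|z-a|)^{n-1}$ on $D(a,1)$ and, more to the point, a covering/univalence-type estimate for $P$ near $a$ (with $P(a)=0$) from which the closed form $1-\sqrt{1+\delta^2-\delta a}$ emerges. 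Your Jensen/mean-value scaffolding does not substitute for this: the decomposition $\log|P'|=\log|P|+\log|P'/P|$ on the circle $|z-c|=1-\delta$ leaves the circular average of $\log|P'/P|$ completely uncontrolled (bounding it from below is essentially equivalent to what you are trying to prove), and the chain of one-sided inequalities you allude to is never assembled in a consistent direction. As written, the proposal proves the easy inequality and asserts the hard one.
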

\noindent From the above lemma, we come up with a formula $\mathcal{N}_0(a)$ such that whenever $n \geq \mathcal{N}_0(a)$, then $m$ is less than some explicit function of $a$. For our purposes, $m \leq \frac{a}{4}$ will suffice.\\

\noindent Upon gaining this control over the size of $m$, we can then remove the dependence on $m$ from other future parameters. This shall become clear when we call upon this new quantity later. In the meantime, let us extract the formula.\\

\noindent Consider the inequality

\begin{equation*}
\left( \frac{\delta}{2} - \frac{1}{\delta n} \log (1-\sqrt{1+\delta^2 -\delta a}) \right) \leq \frac{a}{4}, \ \ \text{for some} \ \ \delta \in (0, \frac{a}{4}].
\end{equation*}

\noindent This holds when:

\begin{equation*}
n \geq \frac{-4 \log (1-\sqrt{1+\delta^2 -\delta a}) }{a \delta -2\delta^2}.
\end{equation*}

\noindent Define $N_0 (a)$ to be:

\begin{equation*}
N_0(a) = \left. \frac{-4 \log (1-\sqrt{1+\delta^2 -\delta a}) }{a \delta -2\delta^2} \right|_{\delta = \frac{a}{4}}.
\end{equation*}

\noindent This simplifies to:

\begin{equation*}
N_0 (a) = \frac{32 \log \left( \frac{4}{4 - \sqrt{16-3a^2}} \right)}{a^2}.
\end{equation*}

\noindent Furthermore, we note that $4 - \sqrt{16-3a^2} > \frac{a^2}{10}$ for all $a \in (0,1)$. We then, for the sake of simplicity define $\mathcal{N}_0 (a)$ to be:

\begin{equation*}
\mathcal{N}_0 (a) = \frac{32 \log \left( \frac{40}{a^2} \right)}{a^2}.
\end{equation*}

\noindent By Lemma \ref{mm1}, we can conclude that for any polynomial $P$ assumed to contradict Sendov's conjecture at $a \in (0,1)$, if $\deg (P) = n \geq \mathcal{N}_0 (a)$, then $m \leq \frac{a}{4}$.

\subsection{Towards explicit analogues of D\'{e}got's bounds}

In this section we study \textbf{Theorems 5} and \textbf{6} from D\'{e}got's paper. From \textbf{Theorem 5}, we study closely the quantities that go into the definition of $N_1$. We will see that the bound $N_1$ as originally defined depends on the real part of the mean of the zeroes of the polynomial $P(z)$. This section deals with how to circumvent this dependence.\\

\noindent The end result is that we come up with the formulas $\mathcal{N}_1(a)$ and $\mathcal{N}_2(a)$, the explicit and continuous analogues of D\'{e}got's $N_1$ and $N_2$ respectively. These new quantities have the following advantages over D\'{e}got's:

\begin{itemize}
\item they are explicitly given,
\item they are continuous in $a$,
\item they depend only on $a$.
\end{itemize}

We begin with \textbf{Theorem 5} from \cite{Degot}.

\subsubsection*{Towards $\mathcal{N}_1(a)$}

\begin{lem}{(\cite{Degot}, Theorem 5):}\label{l4} Suppose $P(z)$ contradicts Sendov's conjecture at $a$. Let $q = \frac{\frac{a}{2}-m}{1 + \frac{a}{2}}$ and let $N_1$ be the smallest integer such that

\begin{equation}\label{d32}
\left(\frac{1+\frac{a}{2}}{1+a} \right)^q \leq \left(\frac{1-\sqrt{1-\frac{a^2}{4}}}{an} \right)^{\frac{1}{n-1}} \text{for all} \ n\geq N_1.
\end{equation}

Then, if $n \geq N_1$,

\begin{equation*}
|P'(a)| \leq \frac{16n}{a^2} \ \text{and} \ |P(0)| \geq \frac{a^2}{16}.
\end{equation*}
\end{lem}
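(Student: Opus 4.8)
The plan is to establish the two inequalities separately, with the estimate on $|P(0)|$ coming essentially for free once the main work on $|P'(a)|$ is done. I would start from the logarithmic derivative identity $\frac{P'(a)}{P(a)}$ -- but since $P(a) = 0$, instead I would work with the factorization $P'(z) = \sum_{k} \prod_{j \neq k}(z - z_j)$ (including the root $a$ among the $z_j$), so that $P'(a) = \prod_{j=1}^{n-1}(a - z_j)$, the product over the non-$a$ roots only. Thus $|P'(a)| = \prod_{j=1}^{n-1}|a - z_j|$, and the task is to bound this product from above. The contradiction hypothesis (no critical point in $|z - a| \leq 1$) together with Lemma \ref{l4}'s defining inequality \eqref{d32} is what should force the product to be small -- specifically, inequality \eqref{d32} is engineered so that a geometric-mean estimate on the $|a - z_j|$ yields a bound of the form $|P'(a)|^{1/(n-1)} \leq \frac{(1 - \sqrt{1 - a^2/4})^{1/(n-1)}}{(an)^{1/(n-1)}} \cdot(\text{something})$, which rearranges to $|P'(a)| \leq \frac{16n}{a^2}$ once one recognizes that $1 - \sqrt{1 - a^2/4} \geq a^2/16$ (a routine scalar inequality, valid since $\sqrt{1 - x} \leq 1 - x/2$ gives $1 - \sqrt{1 - a^2/4} \geq a^2/8 \geq a^2/16$).

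For the mechanism connecting the geometry to \eqref{d32}: since all critical points avoid the disk $|z - a| \leq 1$ and $m \leq a/2$ is the real part of their mean (here I'd invoke that $m$ is small, though the lemma as stated only needs $m$ as a parameter), one can locate a real point $c$ -- D\'{e}got takes $c$ in $(0, a)$, presumably near $a/2$ -- at which $|P(c)|$ admits both a clean lower bound and a clean upper bound. The factor $q = \frac{a/2 - m}{1 + a/2}$ and the base $\frac{1 + a/2}{1 + a}$ appearing in \eqref{d32} strongly suggest that the argument compares $|P(a/2)|$ (or $|P'(a)|$ rewritten via an interpolation at $a/2$) against $\prod |1 + z_j|$-type quantities using the constraint $|z_j| \leq 1$, and that raising to the power $q$ encodes how far the mean $m$ sits to the left of $a/2$. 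The key step I expect to be the crux is exactly this: showing that the contradiction hypothesis forces $|a - z_j|$, on geometric average, to be at most $\frac{1 + a}{1 + a/2}$ raised to an appropriate power, i.e. translating ``$P'$ has no zero within distance $1$ of $a$'' into a multiplicative deficit captured precisely by \eqref{d32}. This is where D\'{e}got's Theorem 5 does its real work, and reproducing it faithfully -- tracking the point $c$, the apposite Jensen/Gauss-Lucas-type inequality, and the role of $m$ -- is the main obstacle.

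Once $|P'(a)| \leq \frac{16n}{a^2}$ is in hand, the bound $|P(0)| \geq \frac{a^2}{16}$ should follow by a short complementary argument: writing $P(0) = (-a)\prod_{j=1}^{n-1}(-z_j) = \pm a \prod_{j=1}^{n-1} z_j$, we get $|P(0)| = a \prod_{j=1}^{n-1}|z_j|$. I would connect this to $|P'(a)|$ via the already-quoted Lemma (Bojanov et al.) or a direct comparison: under the contradiction hypothesis the closed disk $|z - a| \leq (1 + |z_1 \cdots z_{n-1} \cdot a|)^{1/n}$ must fail to be forced to contain a critical point, which since Bojanov's lemma guarantees it does contain one, yields $(1 + a|z_1 \cdots z_{n-1}|)^{1/n} > 1$ -- giving no contradiction directly, so instead the route is surely through the upper bound on $|P'(a)|$: combining $|P'(a)| = \prod|a - z_j|$ with $|a - z_j| \leq 1 + |z_j|$ and a reverse estimate relating $\prod(1 + |z_j|)$ to $\prod|z_j|$ when each $|z_j| \leq 1$. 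Concretely, $\frac{16n}{a^2} \geq |P'(a)|$ combined with the lower bound $|P'(a)| \geq $ (something involving $\prod|z_j|$) forces $\prod|z_j| \geq \frac{a}{16}$, whence $|P(0)| = a\prod|z_j| \geq \frac{a^2}{16}$. I would fill this in by carefully following D\'{e}got's derivation, since the constant $\frac{a^2}{16}$ matching the $\frac{16n}{a^2}$ bound is clearly not a coincidence but the output of one reciprocal relation.
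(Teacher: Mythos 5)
First, a point of reference: the paper offers no proof of this lemma at all --- it is quoted verbatim as D\'{e}got's Theorem~5 and used as a black box --- so your proposal has to stand on its own as a reconstruction of D\'{e}got's argument, and it does not. The identity $P'(a)=\prod_{j=1}^{n-1}(a-z_j)$ and the scalar estimate $1-\sqrt{1-a^2/4}\ge a^2/8\ge a^2/16$ are correct, but the entire content of the lemma --- how the hypothesis that no critical point lies in $|z-a|\le 1$, together with inequality (\ref{d32}), forces $\prod_{j}|a-z_j|\le 16n/a^2$ --- is exactly the step you label ``the main obstacle'' and defer to ``carefully following D\'{e}got's derivation''. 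That step is the theorem, not a detail. Moreover, the decomposition you choose is the wrong one for exploiting the hypothesis: the contradiction hypothesis says nothing directly about the zeros $z_j$ (they could all sit at $-1$, making $\prod_j|a-z_j|=(1+a)^{n-1}$), whereas it does constrain the critical points $w_k$, all of which must lie in the lune $\{|z|\le 1\}\cap\{|z-a|\ge 1\}$ and hence satisfy $\Re(w_k)\le a/2$. The natural factorization is $P'(a)=n\prod_{k=1}^{n-1}(a-w_k)$, and the exponent $q=\frac{a/2-m}{1+a/2}$ and the base $\frac{1+a/2}{1+a}$ arise from playing the pointwise constraint $\Re(w_k)\le a/2$ against the mean constraint $\sum_k\Re(w_k)=(n-1)m$ (the critical points and zeros share the same mean). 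None of this mechanism appears in your sketch.

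Second, the route you propose for $|P(0)|\ge a^2/16$ cannot work as stated. You want to deduce $\prod_j|z_j|\ge a/16$ from the upper bound on $|P'(a)|=\prod_j|a-z_j|$ via ``a reverse estimate relating $\prod(1+|z_j|)$ to $\prod|z_j|$''. No such implication exists: if one zero sits at the origin then $\prod_j|z_j|=0$ while $\prod_j|a-z_j|$ is unaffected, so no upper bound on the latter can bound the former from below; the purported reverse estimate is false for arbitrary points of the unit disk, which is all you are using at that stage. The lower bound on $|P(0)|$ must again invoke the no-critical-point hypothesis directly (for instance through $P(0)=-\int_0^aP'(t)\,dt$ together with lower bounds on $|P'(t)|=n\prod_k|t-w_k|$ coming from the distance of the segment $[0,a]$ to the lune, or through the companion factorization $P'(0)=n(-1)^{n-1}\prod_k w_k$); it is not a formal consequence of $|P'(a)|\le 16n/a^2$. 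So both halves of the lemma remain unproved in your proposal, and the parts that are spelled out in detail are peripheral to where the difficulty lies.
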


\noindent With the help of the quantity $\mathcal{N}_0(a)$ obtained in the previous section, we do this in the following steps:

\begin{itemize}
\item First, we note that by construction, $\mathcal{N}_0(a)$ gives us a high enough degree bound such that any polynomial with degree $n \geq \mathcal{N}_0(a)$ has $m \leq 0.25a$.
\item The quantity $\left( \frac{1+\frac{a}{2}}{1+a} \right) \in (0, 1)$, hence for any $0<q_1 <q_2$:

\begin{equation*}
\left( \frac{1+\frac{a}{2}}{1+a} \right)^{q_1} \geq \left( \frac{1+\frac{a}{2}}{1+a} \right)^{q_2}.
\end{equation*}
\item Therefore this implies that whenever $m \leq \frac{a}{4}$, then

\begin{equation*}
\left(\frac{1+\frac{a}{2}}{1+a} \right)^q \leq \left(\frac{1+\frac{a}{2}}{1+a} \right)^{\frac{\frac{a}{4}}{1+\frac{a}{2}}}.
\end{equation*}

Thus, by Inequality (\ref{d32}), if we have that

\begin{equation*}
\left(\frac{1+\frac{a}{2}}{1+a} \right)^{\frac{\frac{a}{4}}{1+\frac{a}{2}}} \leq \left(\frac{1-\sqrt{1-\frac{a^2}{4}}}{an} \right)^{\frac{1}{n-1}},
\end{equation*}

it would then follow that:

\begin{equation*}
\left(\frac{1+\frac{a}{2}}{1+a} \right)^q \leq \left(\frac{1+\frac{a}{2}}{1+a} \right)^{\frac{\frac{a}{4}}{1+\frac{a}{2}}} \leq \left(\frac{1-\sqrt{1-\frac{a^2}{4}}}{an} \right)^{\frac{1}{n-1}}
\end{equation*}

whenever $m \leq \frac{a}{4}$.

\end{itemize}

\noindent Hence, the version of $N_1(a)$ obtained by replacing $m$ with $\frac{a}{4}$ (and hence $q = \frac{\frac{a}{4}}{1+\frac{a}{2}}$) works for all $m \leq \frac{a}{4}$. With this in mind, we replace the quantity $q(a,m)$ with the new quantity $q'(a) := \frac{\frac{a}{4}}{1+\frac{a}{2}}$ which only depends on $a \in (0,1)$. 

\begin{pro}\label{pro1}  
Let $\mathcal{N}_1(a) = \max \left \{ 9 \left(  \frac{4+2a}{a} \right)^2 , \mathcal{N}_0(a) \right \}$, then for all \newline $n \geq \mathcal{N}_1(a)$, Inequality (\ref{d32}) (with $q'$ in the place of $q$) holds.
\end{pro}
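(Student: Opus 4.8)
The plan is to take logarithms in Inequality~(\ref{d32}) (with $q'$ in place of $q$) and reduce it to an elementary one-variable estimate. Since $q' = \frac{a/4}{1+a/2} = \frac{a}{2(2+a)} > 0$ and $\frac{1+a/2}{1+a}\in(0,1)$, the left-hand side of (\ref{d32}) lies in $(0,1)$; and for $n\geq\mathcal{N}_1(a)$ one readily checks $\frac{1-\sqrt{1-a^2/4}}{an}<1$, so both sides lie in $(0,1)$. Taking logarithms, multiplying through by $-(n-1)<0$ and rearranging, (\ref{d32}) becomes equivalent to
\[
(n-1)\,q'\log\frac{1+a}{1+\frac a2}\;\geq\;\log\frac{an}{1-\sqrt{1-\frac{a^2}{4}}}.
\]

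Next I would replace each side by a simpler expression. For the left-hand side, write $\frac{1+a}{1+a/2}=1+\frac{a}{2+a}$ and use the elementary inequality $\log(1+x)\geq\frac{2x}{2+x}$, valid for $x\geq 0$ (the difference vanishes at $x=0$ and has derivative $\frac{x^2}{(1+x)(2+x)^2}\geq 0$), applied with $x=\frac{a}{2+a}$; this gives $q'\log\frac{1+a}{1+a/2}\geq\frac{a^2}{(2+a)(4+3a)}$. For the right-hand side, $\sqrt{1-t}\leq 1-\frac t2$ gives $1-\sqrt{1-a^2/4}\geq\frac{a^2}{8}$, hence $\frac{an}{1-\sqrt{1-a^2/4}}\leq\frac{8n}{a}$. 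It therefore suffices to prove
\[
h(n):=(n-1)\,\frac{a^2}{(2+a)(4+3a)}-\log\frac{8n}{a}\;\geq\;0\qquad\text{for all }n\geq\mathcal{N}_1(a).
\]
Since $h'(n)=\frac{a^2}{(2+a)(4+3a)}-\frac1n>0$ once $n>\frac{(2+a)(4+3a)}{a^2}$, and $(2+a)(4+3a)\leq 21<36(2+a)^2$ on $(0,1)$, the function $h$ is increasing on $[\mathcal{N}_1(a),\infty)$ (recall $\mathcal{N}_1(a)\geq 9\big(\frac{4+2a}{a}\big)^2=\frac{36(2+a)^2}{a^2}$). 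So it is enough to verify $h(\mathcal{N}_1(a))\geq 0$, and here I would split according to which term realises the maximum in $\mathcal{N}_1(a)=\max\{9(\frac{4+2a}{a})^2,\mathcal{N}_0(a)\}$. If $\mathcal{N}_1(a)=9(\frac{4+2a}{a})^2$, a direct comparison forces $a>0.3$ (for $a\leq 0.3$ one has $36(2+a)^2\leq 36(2.3)^2<32\log(40/0.3^2)\leq 32\log(40/a^2)$, so $\mathcal{N}_0(a)$ would be the larger term); substituting $\mathcal{N}_1(a)\,a^2=36(2+a)^2$ reduces the claim to $\frac{35(2+a)}{4+3a}\geq\log\frac{288(2+a)^2}{a^3}$, whose left side is at least $15$ while, since $a>0.3$, its right side is at most $\log(96000)<12$. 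If instead $\mathcal{N}_1(a)=\mathcal{N}_0(a)$, then symmetrically $a\leq 0.7$; substituting $\mathcal{N}_0(a)\,a^2=32\log(40/a^2)$, bounding $(2+a)(4+3a)\leq 16.5$, and setting $u=\log(1/a)$ (so $\log(40/a^2)=\log 40+2u$), the claim collapses to a single-variable inequality of the shape $c_0+c_1u-\log(c_2+2u)\geq 0$ with $c_1>0$, to be checked for $u\geq\log(1/0.7)$; the left side is increasing in $u$ and positive at the endpoint, which closes this case.

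The step I expect to be most delicate is the case analysis near the crossover $a\approx 0.67$, where the two terms defining $\mathcal{N}_1(a)$ are comparable and almost all the slack in the estimates is consumed: this is exactly why the sharper bound $\log(1+x)\geq\frac{2x}{2+x}$ is needed in place of the cruder $\log(1+x)\geq\frac{x}{1+x}$ or $\geq\frac x2$, and why the restrictions $a>0.3$ and $a\leq 0.7$ must be fed back into the constant estimates. Away from that crossover the inequalities have ample room and the remaining verifications are routine one-variable calculus and numerics.
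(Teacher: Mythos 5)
Your proposal is correct, but it takes a genuinely different route from the paper's, and the difference turns out to matter. The paper also takes logarithms and rewrites the condition as $n \geq 1 + m_1(a) + m_2(a)\log n$, but then asserts that both $m_1(a)$ and $m_2(a)$ are below $1/q' = \frac{4+2a}{a} =: n_1$, applies $\log n \leq 2\sqrt{n}$, and completes the square to conclude that $n \geq 9n_1^2$ alone forces Inequality (\ref{d32}); the $\mathcal{N}_0(a)$ term in the max is used only to justify replacing $q$ by $q'$. That intermediate assertion does not hold: since $\log\frac{1+a}{1+a/2} = \log\bigl(1+\tfrac{a}{2+a}\bigr) \approx \tfrac{a}{2+a}$, one has $m_2(a) = \bigl(q'\log\tfrac{1+a}{1+a/2}\bigr)^{-1} \sim \tfrac{2(2+a)^2}{a^2} \gg \tfrac{1}{q'}$ for small $a$, and indeed at $a=0.01$ Inequality (\ref{d32}) with $q'$ fails at $n = 9n_1^2 \approx 1.45\times 10^6$ and is only restored at $n \approx \mathcal{N}_0(0.01) \approx 4.1\times 10^6$. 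Your argument --- the sharper two-sided elementary bounds $\log(1+x)\geq\tfrac{2x}{2+x}$ and $\sqrt{1-t}\leq 1-\tfrac{t}{2}$, the monotonicity of $h$ beyond $\tfrac{(2+a)(4+3a)}{a^2}$, and the two-case endpoint check that genuinely exploits the $\mathcal{N}_0(a)$ branch of the max when $a\leq 0.7$ --- is precisely what is needed to make the stated $\mathcal{N}_1(a)$ work uniformly in $a$. I verified your numerical endpoints (the Case 1 comparison $\tfrac{35(2+a)}{4+3a}\geq 15$ against $\log(96000)<12$, and positivity and monotonicity in $u$ at $u=\log(1/0.7)$ in Case 2), and they check out. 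So your proof not only establishes the proposition but in effect repairs a gap in the paper's own argument; the price is the case analysis near the crossover of the two terms in the max, which the paper's (flawed) shortcut avoided.
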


\begin{proof}
We shall construct the function $\mathcal{N}_1(a)$ explicitly:\\

\noindent Replacing $q$ by $q'$ in Inequality (\ref{d32}) and then taking $\log$ on both sides, we get that the new inequality holds if and only if:

\begin{equation*}
q' \log \left( \frac{1+\frac{a}{2}}{1+a} \right) \leq \frac{1}{n-1}\log \left( \frac{1-\sqrt{1-\frac{a^2}{4}}}{an} \right)
\end{equation*}

%

This is true if and only if:

\begin{equation}\label{d4}
n \geq 1 + \frac{\log \left( 1-\sqrt{1-\frac{a^2}{4}}  \right) - \log(a)}{q' \log \left( \frac{1+\frac{a}{2}}{1+a} \right)} - \frac{\log (n)}{q' \log \left( \frac{1+\frac{a}{2}}{1+a} \right)}.
\end{equation}

More succinctly, we write:

\begin{equation}\label{d5}
n \geq 1 + m_1(a) + m_2(a) \cdot \log(n),
\end{equation}

where:

\begin{itemize}
\item $m_1(a) = \frac{\log \left( 1-\sqrt{1-\frac{a^2}{4}}  \right) - \log(a)}{q' \log \left( \frac{1+\frac{a}{2}}{1+a} \right)}$, and
\item $m_2(a) = \frac{-1}{q' \log \left( \frac{1+\frac{a}{2}}{1+a} \right)}$.
\end{itemize}


\noindent Both $m_1(a)$ and $m_2(a)$ are less than $\frac{1}{q'} = \frac{4+2a}{a}$. We let $n_1 = n_1(a) = \frac{4+2a}{a}$.\\

\noindent Proceeding, we have that Inequality (\ref{d32}) will still hold if:

\begin{equation*}
n \geq 1 + n_1 + 2\sqrt{n} n_1 \geq 1 + m_1(a) + m_2(a) \log(n).
\end{equation*}

\noindent And therefore, upon completing the square in the first inequality, this is true when:

\begin{equation*}
n \geq \left[ n_1 + (1 + n_1 + n_1^2 )^\frac{1}{2} \right]^2
\end{equation*}

\noindent Simplifying further, this will be true for any $n \geq 9n_1^2$.\\

We then let:

\begin{equation}\label{d6}
N'_1(a) = 9 [n_1(a)]^2 = 9 \left(  \frac{4+2a}{a} \right)^2.
\end{equation}

\noindent Finally, letting

\begin{equation*}
\mathcal{N}_1(a) = \max \{N_1(a), \mathcal{N}_0(a)  \}
\end{equation*}

\noindent completes the proof.

%

\end{proof} 

\subsubsection*{Towards $\mathcal{N}_2(a,c)$}

Having obtained the explicit formula $\mathcal{N}_1(a)$, we now turn our attention to D\'{e}got's \textbf{Theorem 6}, wherein conditions to be satisfied by the second bound $N_2$ were stipulated. The statement given below stipulates such a condition. We state our version, the only difference from his being that we replaced the appearance of $q$ with $q'$.\\

\begin{defn} Let $c \in (0,a)$. For $x \in (0,1)$ set:

\begin{equation*}
D(x) := \max \left\{ \left(\frac{1}{1+a} \right)^{x} ; \left(\frac{1+c}{1+a}\right)^{x} \left(\sqrt{1+c^2 -ac} \right)^{1-x} \right\}.
\end{equation*}

\end{defn}

It is easy to see that $D(x) <1$ for all $x \in (0,1)$.\\

Proceeding, define $N'_2$ to be the smallest integer such that

\begin{equation}\label{d7}
{D(q')}^{n-1} \leq \frac{a}{16n} \ \text{for all} \ n\geq N'_2.
\end{equation}


\begin{rem}
The role of the quantity $N'_2$ (and $N'_1$) will become apparent when bounding the quantity $|P(c)|$ as mentioned in the introduction. We shall consider this in the next section. In the meantime, we bring the reader's attention to the following:
\end{rem}

\begin{pro} 
Let $\mathcal{N}_2(a,c) = \max \left \{ 9 \left(   \frac{\log \left( \frac{a}{16} \right)}{\log \left(  \frac{c}{1+a} \right) }  \right)^2 , \mathcal{N}_0(a) \right \}$, then for all $n \geq \mathcal{N}_2(a,c)$, Inequality (\ref{d7}) holds.
\end{pro}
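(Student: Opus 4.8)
The plan is to mimic exactly the argument used for Proposition \ref{pro1}, since Inequality (\ref{d7}) has the same structural shape as Inequality (\ref{d32}): after taking logarithms it becomes a bound of the form $n \geq 1 + c_1(a,c) + c_2(a,c)\log n$, and the same ``complete the square'' trick will then finish it. The one extra ingredient is that we must bound $D(q')$ from above by a clean explicit quantity, because $D$ is defined as a maximum of two expressions and we do not want that maximum appearing in the final formula.

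First I would estimate $D(q')$. Since $q' = \tfrac{a/4}{1+a/2} \in (0,1)$ and $c \in (0,a)$, both terms defining $D(q')$ are strictly less than $1$ (as already observed), and I would show each is bounded above by $\tfrac{c}{1+a}$, or more precisely replace $D(q')$ by a slightly larger but simpler expression whose logarithm is comparable to $\log\!\left(\tfrac{c}{1+a}\right)$. Concretely, the first term $\left(\tfrac{1}{1+a}\right)^{q'}$ and the second term $\left(\tfrac{1+c}{1+a}\right)^{q'}\!\left(\sqrt{1+c^2-ac}\right)^{1-q'}$ both sit below $1$, and the worst case governs the size of $\log D(q')$; I would bound $-\log D(q') \geq -\log\!\left(\tfrac{c}{1+a}\right)^{?}$ up to an absolute constant, matching the shape of the claimed formula where $\log\!\left(\tfrac{c}{1+a}\right)$ appears. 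This is the step where the precise constant $9$ and the precise exponent get pinned down, so it is the main obstacle: one has to check that the chosen simplification is genuinely an upper bound for $D(q')$ for \emph{all} admissible $a$ and $c$, not just generically.

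Next, taking $\log$ of both sides of (\ref{d7}) and dividing by $n-1$, the inequality ${D(q')}^{n-1} \leq \tfrac{a}{16n}$ becomes
\begin{equation*}
(n-1)\log D(q') \leq \log\!\left(\tfrac{a}{16}\right) - \log n,
\end{equation*}
and since $\log D(q') < 0$ this rearranges to $n \geq 1 + \tfrac{\log(a/16)}{\log D(q')} + \tfrac{-1}{\log D(q')}\log n$. Using the bound on $D(q')$ from the previous step, both coefficients $\tfrac{\log(a/16)}{\log D(q')}$ and $\tfrac{-1}{\log D(q')}$ are at most $n_2 := \dfrac{\log(a/16)}{\log\!\left(\frac{c}{1+a}\right)}$ (note $\log(a/16)<0$ and $\log\!\left(\tfrac{c}{1+a}\right)<0$, so this ratio is positive). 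Then, exactly as before, it suffices that $n \geq 1 + n_2 + 2\sqrt{n}\,n_2$, which by completing the square holds whenever $n \geq \left[n_2 + (1+n_2+n_2^2)^{1/2}\right]^2$, and a fortiori whenever $n \geq 9 n_2^2$.

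Finally, intersecting with the requirement $n \geq \mathcal{N}_0(a)$ (needed so that $m \leq a/4$ and hence the replacement of $q$ by $q'$ is legitimate) gives $n \geq \max\{9 n_2^2, \mathcal{N}_0(a)\} = \mathcal{N}_2(a,c)$, as claimed. I expect the calculus-free part (completing the square, comparing coefficients) to be routine; the only genuine care is in Step 1, verifying the uniform upper bound on $D(q')$ by $\tfrac{c}{1+a}$ — in particular handling the second branch of the $\max$, where the factor $\left(\sqrt{1+c^2-ac}\right)^{1-q'}$ can exceed $1$ and must be controlled against $\left(\tfrac{1+c}{1+a}\right)^{q'}$.
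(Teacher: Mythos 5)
Your scaffolding — take logarithms of (\ref{d7}), rearrange to $n \geq 1 + m_1 + m_2\log n$, replace $\log n$ by $2\sqrt{n}$, complete the square, and intersect with $n \geq \mathcal{N}_0(a)$ — is exactly the route the paper sketches. But the estimate you isolate in Step 1 as ``the main obstacle'' is false, and false in the direction that kills the argument. To conclude that $\frac{\log(a/16)}{\log D(q')}$ and $\frac{-1}{\log D(q')}$ are at most $n_2 = \frac{\log(a/16)}{\log(c/(1+a))}$, you need $\log D(q')$ to be at least as negative as $\log\frac{c}{1+a}$, i.e.\ $D(q') \leq \frac{c}{1+a}$. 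However, the \emph{first} branch of the maximum defining $D$ already gives $D(q') \geq \left(\frac{1}{1+a}\right)^{q'} \geq 2^{-1/6} > 0.89$, since $q' = \frac{a}{4+2a} \leq \frac{1}{6}$, whereas $\frac{c}{1+a} < \frac{a}{1+a} < \frac{1}{2}$. So $D(q') > \frac{c}{1+a}$ always, by a wide margin; the paper itself records only the reverse inequality $D(x)\geq \frac{c}{1+a}$. With the true direction, $n_2$ is a \emph{lower} bound for your coefficients, not an upper bound, and the completion-of-square step does not close. Contrary to your closing remark, it is not the second branch of the max that needs care — the first branch alone already violates the bound you need.

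The failure is quantitative, not cosmetic, and cannot be repaired by a ``comparable up to an absolute constant'' version of the same bound: since $q'$ is of order $a/4$, one has $-\log D(q')$ of order $q'$ times a bounded quantity, while $-\log\frac{c}{1+a}$ stays of order $1$ or larger, so the ratio of the two logarithms tends to $0$ as $a\to 0$. Concretely, for $a=0.5$ and $c = a\gamma(a)=0.475$ one computes $q'=0.1$, $D(q')\approx 0.993$, hence $m_2 = \frac{-1}{\log D(q')}\approx 142$ while $n_2\approx 3$; the smallest $n$ for which (\ref{d7}) actually holds is about $1550$, well above both $9n_2^2\approx 82$ and $\mathcal{N}_0(0.5)\approx 650$. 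Any correct argument must keep the factor $q'$ (or its analogue) visible in $\log D(q')$ before completing the square, as Proposition \ref{pro1} does when it bounds its coefficients by $1/q'$, rather than compare $D(q')$ directly with $\frac{c}{1+a}$. As written, your proposal has a genuine gap at its pivotal step.
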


\noindent The proof of the above proposition follows the same technique used in the proof of Proposition \ref{pro1}. One starts from Inequality (\ref{d7}) and argues as in Proposition \ref{pro1} to get an inequality of the form $n \geq N'_2(a,c)$. Using the fact that $D(x) \geq \frac{c}{1+a}$ for all $x \in (0,1)$, we can, as in the proof of Proposition \ref{pro1}, obtain simpler estimates of otherwise technical terms. Thus, whenever $n \geq \mathcal{N}_2(a,c) := \max \{ N'_2(a,c), \mathcal{N}_0(a)  \}$, then Inequality (\ref{d7}) holds, with $N_2$ replaced by $\mathcal{N}_2(a,c)$.\\




\section{Bounds on the size of $|P(c)|$}

\subsection{The upper bound of $|P(c)|$}

In this section, we now put to use the bounds $\mathcal{N}_1(a)$ and $\mathcal{N}_2(a,c)$ to obtain bounds on the size of $|P(c)|$. The first result is the analogue of D\'{e}got's {\bf Theorem 6}, which gives the upper bound on $|P(c)|$.

\begin{theorem}\label{T1} Suppose $P(z)$ contradicts Sendov's conjecture at $a \in (0,1)$ and let $c\in (0,a)$. If $\deg (P) = n \geq \max \{\mathcal{N}_1(a), \mathcal{N}_2(a,c)  \}$, then,

\begin{equation*}
|P(c)| \leq 1 + a.
\end{equation*}
\end{theorem}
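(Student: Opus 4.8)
The plan is to reconstruct D\'{e}got's bound on $|P(c)|$ (his Theorem 6) in a way that only invokes the two explicit degree thresholds $\mathcal{N}_1(a)$ and $\mathcal{N}_2(a,c)$ already established. The starting point is the observation that $P(c) = (c-a)\prod_{j=1}^{n-1}(c-z_j)$, so that $|P(c)| = (a-c)\prod_{j=1}^{n-1}|c-z_j|$. Since each $|z_j| \le 1$ and $c \in (0,a) \subset (0,1)$, the factor $|c-z_j|$ is at most $1+c \le 1+a$ for every $j$; the trouble is that the trivial bound $|P(c)| \le (a-c)(1+a)^{n-1}$ blows up with $n$. The content of the argument is to trade the crude factor-by-factor estimate for a global one that uses the hypothesis that $P$ contradicts Sendov's conjecture at $a$, namely that $P'$ has no zero in $|z-a|\le 1$, together with the information on $m$ secured by $n \ge \mathcal{N}_0(a)$.

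First I would recall, from the proof structure of Lemma \ref{l4} (D\'{e}got's Theorem 5) and the definition of $D(x)$, how one passes from a product over the zeroes to an expression controlled by $D(q')^{n-1}$: grouping the zeroes according to whether they lie near $0$ or near the arc, and using the normalization coming from $|P(0)| \ge a^2/16$ (valid once $n \ge \mathcal{N}_1(a)$), one obtains an estimate of the shape $|P(c)| \le (\text{something bounded}) \cdot n \cdot D(q')^{n-1} \cdot (\text{lower-order factors})$. The precise combinatorial/geometric bookkeeping is exactly what $D(x)$ was designed to package: the two terms in the maximum defining $D(x)$ correspond to the two ways a zero contributes to the product $\prod |c - z_j|$ relative to the product $\prod|z_j|$ that is pinned down by $|P(0)|$. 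Carrying this through yields $|P(c)| \le C(a) \cdot n \cdot D(q')^{n-1}$ for an explicit, mild $C(a)$ (something like $16/a$ times a bounded factor, absorbing the $(1+a)$'s).

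Next I would invoke Inequality (\ref{d7}): since $n \ge \mathcal{N}_2(a,c)$, we have $D(q')^{n-1} \le \tfrac{a}{16n}$. Substituting this into the displayed estimate collapses the dangerous $n \cdot D(q')^{n-1}$ down to a constant of size $a/16$, and after tidying the remaining bounded factors one lands at $|P(c)| \le 1+a$. One must also use $n \ge \mathcal{N}_1(a)$ to guarantee $|P(0)| \ge a^2/16$ (needed to normalize the product) and $n \ge \mathcal{N}_0(a)$ (inside both $\mathcal{N}_1$ and $\mathcal{N}_2$) so that $m \le a/4$, which is what legitimizes the replacement of $q$ by $q'$ in $D$. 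Hence taking $n \ge \max\{\mathcal{N}_1(a), \mathcal{N}_2(a,c)\}$ suffices.

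The main obstacle I anticipate is not the final substitution — that is essentially bookkeeping — but faithfully reproducing the geometric step that produces the factor $D(q')^{n-1}$ with the \emph{correct} exponent and with all constants genuinely bounded in terms of $a$ alone. In D\'{e}got's treatment this relies on a careful splitting of the zeroes and on the inequality relating $\prod|c-z_j|$ to $\prod|z_j|^{q'}$ via convexity/logarithmic estimates, and it is here that the hypothesis ``no critical point within distance $1$ of $a$'' and the bound on $m$ actually get used. I would need to check that every crude simplification made along the way (e.g.\ bounding $(1+c)/(1+a) < 1$, or replacing technical terms by $D(x) \ge c/(1+a)$) is still compatible with the endpoint being $1+a$ rather than something slightly larger; if it is not, one simply strengthens $\mathcal{N}_1$ or $\mathcal{N}_2$ by a bounded multiplicative factor, which does not affect the final shape of $\mathcal{N}(a)$.
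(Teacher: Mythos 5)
Your proposal takes essentially the same route as the paper, whose own ``proof'' of Theorem \ref{T1} is simply the remark that one reruns D\'{e}got's proof of his Theorem 6 with $q$ replaced by $q'$ and $N_1$, $N_2$ replaced by $\mathcal{N}_1(a)$, $\mathcal{N}_2(a,c)$. You correctly identify the key mechanism --- Lemma \ref{l4} supplying the $16n/a^2$-type normalization once $n \geq \mathcal{N}_1(a)$, Inequality (\ref{d7}) collapsing $n\cdot D(q')^{n-1}$ to $a/16$ once $n \geq \mathcal{N}_2(a,c)$, and $\mathcal{N}_0(a)$ legitimizing $q \mapsto q'$ --- so the sketch is consistent with the paper's (equally deferred) argument.
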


\noindent The proof of the above theorem is essentially a modification of D\'{e}got's proof of his {\bf Theorem 6}, the only changes being the replacement of the quantities $q$ with $q'$, and $N_1, N_2$ with $\mathcal{N}_1(a)$ and $\mathcal{N}_2(a,c)$ respectively.

\subsection{Towards the lower bound of $|P(c)|$}

\noindent In this section we look at D\'{e}got's \textbf{Theorem 7}. The goal is to obtain constants $C>0$ and $K>1$ such that for large enough degree $n$, the value of $P(c)$ satisfies $|P(c)| \geq C \cdot K^n$. To this end D\'{e}got defined the following new parameters:

\begin{equation*}
p = \frac{\frac{a}{2}-m}{1-\frac{a}{2}}, \ r = \frac{c(a-c)}{2(1-c^2)}, \ \alpha = \frac{\log (\frac{a}{16})}{\log \left( \frac{c+r}{1+cr}  \right)}
\end{equation*}

and

\begin{equation}\label{ek}
K = \min \left \{ (1+c-ac)^p \sqrt{1+c^2-ac}^{1-p};  (1+c)^q \sqrt{1+c^2-ac}^{1-q} \right \},
\end{equation}
 \noindent where $q := \frac{\frac{a}{2}-m}{1+\frac{a}{2}}$ and $m$ is as defined in Definition \ref{v1}. We give the statement of the theorem below, bearing in mind the definition of $N_1$ from Lemma \ref{l4}.

\begin{lem}{(\cite{Degot}, Theorem 7):}\label{l6} For the previously defined parameters, if the degree $n$ of $P(z)$ is such that $n \geq N_1$, then:

\begin{equation*}
|P(c)|\geq \frac{(1-c)(a-c)}{1-ac} r^\alpha K^{n-1}.
\end{equation*}
\end{lem}

\noindent Before proceeding, we would like to bring the reader's attention to two observations:\\

\noindent \textbf{Observation 1}: For $K$ as defined in Equation \ref{ek}, one can always find $c$ sufficiently close to $a$ such that $K>1$. That is:\\

As $c \rightarrow a$,

\begin{equation*}
\begin{split}
&  (1+c-ac)^p \sqrt{1+c^2-ac}^{1-p} \rightarrow (1+a(1-a))^p >1\\ 
\text{and similarly}\\
&  (1+c)^q \sqrt{1+c^2-ac}^{1-q} \rightarrow (1+a)^q >1.
\end{split}
\end{equation*}

\noindent This observation was enough for D\'{e}got's results, however, recall that we ultimately want a degree bound that depends only on $a$. Thus we would like to obtain an explicit formula $c = c(a)$ which will always yield a $c$ (in terms of $a$) close enough to $a$ such that $K>1$. We will in fact also explicitly bound $K$ from below in terms of $a$. We introduce the quantity $p'(a)  = \frac{\frac{a}{4}}{1-\frac{a}{2}} = p'$ to take the place of $p$ in order to avoid the dependence on $m$. First, for ease of notation, from Equation (\ref{ek}) let:

\begin{equation*}
K_1(a,c,p) = (1+c-ac)^{p} \sqrt{1+c^2-ac}^{1-p}
\end{equation*}

\noindent and 

\begin{equation*}
K_2(a,c,q) = (1+c)^{q} \sqrt{1+c^2-ac}^{1-q}.
\end{equation*} 
 
\noindent Our version of $K$ then becomes:

\begin{equation*}
K'  := \min \left\{ K_1(a,c,p'); K_2(a,c,q')  \right\}
\end{equation*}

\noindent One can verify that if $p_1 \geq p_2>0$ and $q_1 \geq q_2 >0$, then:

\begin{equation*}
K_1(a,c,p_1) \geq K_1(a,c,p_2)  \  \text{and} \ \  K_2(a,c,q_1) \geq K_2(a,c,q_2).
\end{equation*}


\noindent With the above discussion in mind, we see that the conclusion of Lemma \ref{l6} still holds with $K'$ in place of $K$ whenever $n \geq \mathcal{N}_1(a)$. This will become more clear in the discussion leading towards our Theorem \ref{T2}, which, \textit{mutatis mutandis}, is a restatement of Lemma \ref{l6}.\\

\noindent We may now proceed and study how one can obtain an explicit formula $c(a)$ and the lower bound for $K'$. In preparation for the result, we need to first recall the following logarithmic inequalities:

\begin{lem}{(Useful $\log$ inequalities):}\label{ll}

\begin{itemize}
\item $\log (1+x) \geq \frac{x}{2}$ for $x \in [0,1]$,
\item $\frac{x}{x+1} \leq \log(1+x) \leq x $ for $x > -1$.
\end{itemize}

\end{lem}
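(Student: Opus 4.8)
The final statement is Lemma~\ref{ll}, a pair of elementary logarithmic inequalities. Let me sketch how to prove these.

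\bigskip

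The plan is to verify each inequality by standard single-variable calculus, treating the two bullet points separately.

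For the first inequality, $\log(1+x) \geq \frac{x}{2}$ on $[0,1]$, I would set $f(x) = \log(1+x) - \frac{x}{2}$ and observe that $f(0) = 0$. Differentiating gives $f'(x) = \frac{1}{1+x} - \frac{1}{2} = \frac{1-x}{2(1+x)}$, which is nonnegative precisely for $x \in [0,1]$. Hence $f$ is nondecreasing on $[0,1]$, so $f(x) \geq f(0) = 0$ throughout, which is the claim. (One could equally note that equality holds at both endpoints $x=0$ and $x=1$, with $f$ concave in between, giving an alternative one-line justification.)

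For the second chain, $\frac{x}{x+1} \leq \log(1+x) \leq x$ for $x > -1$, I would again use auxiliary functions vanishing at $0$. For the right-hand inequality, let $g(x) = x - \log(1+x)$; then $g(0)=0$ and $g'(x) = 1 - \frac{1}{1+x} = \frac{x}{1+x}$, which is negative on $(-1,0)$ and positive on $(0,\infty)$, so $g$ has a global minimum of $0$ at $x=0$, giving $g(x)\geq 0$. For the left-hand inequality, apply the just-proved bound $\log(1+t) \leq t$ with the substitution $t = \frac{-x}{1+x}$ (valid since $x > -1$ forces $1+x > 0$, and $t > -1$ is easily checked): this yields $\log\!\big(\frac{1}{1+x}\big) \leq \frac{-x}{1+x}$, i.e. $-\log(1+x) \leq \frac{-x}{1+x}$, which rearranges to $\log(1+x) \geq \frac{x}{1+x}$. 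Alternatively, one sets $h(x) = \log(1+x) - \frac{x}{1+x}$, notes $h(0)=0$ and $h'(x) = \frac{1}{1+x} - \frac{1}{(1+x)^2} = \frac{x}{(1+x)^2}$, and concludes as before.

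There is no real obstacle here; these are textbook estimates, and the only point requiring a moment's care is checking that the substitution $t = -x/(1+x)$ stays in the valid range $t > -1$ when deriving the lower bound from the upper one — but since the alternative direct-derivative argument is equally short, nothing is at stake. I would present whichever of the two routes is more compact in the writeup.
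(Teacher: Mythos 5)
Your proof is correct; the paper itself offers no proof of Lemma~\ref{ll}, simply recalling these as standard estimates, and your derivative arguments (monotonicity of $\log(1+x)-\tfrac{x}{2}$ on $[0,1]$, and the sign analysis of $x-\log(1+x)$ and $\log(1+x)-\tfrac{x}{1+x}$ about their common zero at $x=0$) are exactly the textbook route one would supply. One small correction to your parenthetical aside on the first inequality: equality does \emph{not} hold at $x=1$, since $\log 2 \approx 0.693 > \tfrac{1}{2}$; the concavity shortcut still works, but only in the form $f(x)\geq\min\{f(0),f(1)\}=f(0)=0$, not via ``equality at both endpoints.''
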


\noindent We proceed to define the quantity $\mu_2(a)$ as follows:

\begin{equation*}
\mu_2(a) = \left[ \left( \frac{1}{2a} \left( \frac{2}{q'} -2 \right) -\frac{1}{2} \right)^2 - \left( \frac{1}{a^2} - \frac{1}{a} \left( \frac{2}{q'} - 2 \right) \right) \right]^\frac{1}{2} + \left[ \frac{1}{2} - \frac{1}{2a} \left( \frac{2}{q'} - 2 \right)   \right],
\end{equation*}

\noindent and note that this expresses the only positive root of the quadratic equation:

\begin{equation}\label{tm1}
x^2 + \left[ \left( \frac{1}{a} \left( \frac{2}{q'} -2 \right) -1 \right)   \right]x + \left( \frac{1}{a^2} - \frac{1}{a} \left( \frac{2}{q'} - 2 \right) \right) = 0.
\end{equation}

\noindent For $a \in (0,1)$, the formula $\mu_2(a)$ simplifies to:

\begin{equation}\label{argg}
\mu_2(a) = \left( \frac{a^4 +4a^3 +16a^2 +32a +64}{4a^4}  \right)^\frac{1}{2} + \frac{a^2-2a-8}{2a^2}.
\end{equation}

\noindent \textbf{Claim}: $0 < \mu_2(a) <1$.

\begin{proof}
Recalling that $q'(a) = \frac{\frac{a}{4}}{1+\frac{a}{2}} = \frac{a}{4+2a}$ , the quadratic Equation (\ref{tm1}) can be written as:

\begin{equation*}
f(x) = x^2 + \beta (a)x + \rho (a) = 0,
\end{equation*}
where:

\begin{equation*}
\beta (a) = \frac{8+2a-a^2}{a^2} \ \ \text{and} \ \ \rho (a) = \frac{-7-2a}{a^2} <0.
\end{equation*}

\noindent One notes that $f(0) = \rho (a) <0$ and $f(1) = \frac{1}{a^2} >0$. Since the constant term of $f(x)$ is negative whilst the leading coefficient is positive, this implies that the other root is negative. The claim follows.

\end{proof}

%

\noindent We may now proceed to state and prove the proposition.

\begin{pro}\label{pip1} Let $\gamma (a) = 0.1a + 0.9$. Then $K_2(a, a\gamma (a), q') > 1$ for all $a \in (0,1)$.
\end{pro}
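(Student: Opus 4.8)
The plan is to show that $K_2(a,c,q') = (1+c)^{q'}\sqrt{1+c^2-ac}^{\,1-q'} > 1$ at the specific choice $c = a\gamma(a) = a(0.1a+0.9)$. Taking logarithms, it suffices to prove
\begin{equation*}
q' \log(1+c) + \tfrac{1}{2}(1-q')\log(1+c^2-ac) > 0.
\end{equation*}
Since $q' = \frac{a}{4+2a} \in (0,\tfrac12)$ and $1-q' > 0$, the first term is positive and the second is the only possible obstruction: we need $1+c^2-ac$ to be either $\geq 1$, or at least not so far below $1$ that the (small) negative contribution $\tfrac12(1-q')\log(1+c^2-ac)$ overwhelms the positive $q'\log(1+c)$. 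Note $1+c^2-ac = 1 - c(a-c) = 1 - a^2\gamma(1-\gamma)$, and since $\gamma \in (0.9,1)$ we have $a - c = a(1-\gamma) = a(0.1)(1-a) \geq 0$, so indeed $1+c^2-ac \le 1$ in general; hence this really is the delicate direction.

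First I would make the reduction above and then bound the two logarithms using Lemma \ref{ll}. For the positive term, $\log(1+c) \geq \tfrac{c}{2}$ when $c \in [0,1]$ (which holds since $c = a\gamma < a < 1$), giving $q'\log(1+c) \geq \frac{q' c}{2}$. For the negative term, write $\log(1+c^2-ac) = \log\bigl(1 - c(a-c)\bigr) \geq \frac{-c(a-c)}{1-c(a-c)} \geq -2c(a-c)$, using $\frac{x}{x+1}\le\log(1+x)$ from Lemma \ref{ll} with $x = -c(a-c)$ (valid since $x > -1$) together with the crude bound $1 - c(a-c) \ge \tfrac12$, which one checks holds for all $a\in(0,1)$ because $c(a-c) = a^2\gamma(1-\gamma) \le a^2 \cdot 0.1 \le 0.1$. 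Combining, it suffices to have
\begin{equation*}
\frac{q' c}{2} - (1-q')\, c(a-c) > 0, \quad\text{i.e.}\quad \frac{q'}{2} > (1-q')(a-c) = (1-q')\,a(1-\gamma).
\end{equation*}
Now $a - c = 0.1a(1-a) \le 0.1 a$, and $q' = \frac{a}{4+2a} \ge \frac{a}{6}$, so $\frac{q'}{2} \ge \frac{a}{12}$, while $(1-q')(a-c) \le 1 \cdot 0.1 a = \frac{a}{10}$. Since $\frac{a}{12} < \frac{a}{10}$, this crude comparison is \emph{not} quite enough, so I would instead keep the factor $1-q'$ and the exact value $a-c = 0.1a(1-a)$: we need $\frac{q'}{2} > (1-q')\cdot 0.1 a(1-a)$. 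Dividing by $a$ and substituting $q' = \frac{a}{4+2a}$, this becomes $\frac{1}{2(4+2a)} > \frac{4+a}{4+2a}\cdot 0.1(1-a)$, i.e. $\frac{1}{2} > 0.1(4+a)(1-a)$, i.e. $5 > (4+a)(1-a) = 4 - 3a - a^2$, which is clear since $4 - 3a - a^2 \le 4 < 5$ for all $a\in(0,1)$. This closes the argument.

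The main obstacle, as the above shows, is bookkeeping the constants so that the genuinely negative contribution of $\log(1+c^2-ac)$ is controlled: one must not throw away the factor $(1-q')$ or the factor $(1-a)$ in $a-c$ prematurely, or the inequality fails by a constant. I would therefore present the chain of reductions carefully, verifying at each use of Lemma \ref{ll} that the hypotheses ($c\in[0,1]$, $x>-1$) hold — all of which follow from $0 < c = a\gamma(a) < a < 1$ and $c(a-c) \le 0.1$ — and conclude with the one-line polynomial inequality $4 - 3a - a^2 < 5$ on $(0,1)$.
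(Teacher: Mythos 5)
Your proof is correct. The opening moves coincide with the paper's: both take logarithms and apply the two inequalities of Lemma \ref{ll} (namely $\log(1+c)\ge c/2$ and $\log(1+x)\ge x/(x+1)$) to the two factors of $K_2$. Where you diverge is in the endgame. The paper keeps $\gamma$ as a free parameter, converts the resulting condition into a quadratic inequality in $\gamma$, identifies its positive root $\mu_2(a)$, and then must prove $0.1a+0.9>\mu_2(a)$ on $(0,1)$ --- which it does via a Taylor expansion, a monotonicity argument, and Mathematica computations of the real roots of an auxiliary polynomial. You instead substitute $c=a\gamma(a)$ from the outset and add one extra crude step, $\frac{c(a-c)}{1-c(a-c)}\le 2c(a-c)$ (justified by $c(a-c)=a^2\gamma(1-\gamma)<0.1$), which collapses everything to the one-line polynomial inequality $(4+a)(1-a)<5$. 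This is genuinely simpler and removes the computer-algebra dependence. The only thing you give up is quantitative: the paper's route yields $\log K_2(a,a\gamma,q')>\frac{a\gamma q'}{4}$ (Remark \ref{remm}), a bound reused later when $\log(K')$ is estimated from below in the derivation of $\mathcal{N}(a)$; your chain gives the same-order but slightly weaker $\log K_2\ge a\gamma q'\left(\frac{1}{2}-0.1(4+a)(1-a)\right)\ge 0.1\,a\gamma q'$, which would propagate a marginally larger constant into the final formula. For the proposition as stated, your argument is complete.
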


\begin{proof}
For $a \in (0,1)$ let $\gamma$ be arbitrary and satisfy:

\begin{equation*}
1> \gamma> \left[ \left( \frac{1}{2a} \left( \frac{2}{q'} -2 \right) -\frac{1}{2} \right)^2 - \left( \frac{1}{a^2} - \frac{1}{a} \left( \frac{2}{q'} - 2 \right) \right) \right]^\frac{1}{2} + \left[ \frac{1}{2} - \frac{1}{2a} \left( \frac{2}{q'} - 2 \right)   \right] > 0
\end{equation*}

\noindent Focusing on the middle inequality, bearing in mind that $\mu_2(a)$ is a root of the Equation \ref{tm1}, reversing the ``completion of the square'' with respect to $\gamma$ , yields:

\begin{equation*}
\left[ \gamma+ \left( \frac{1}{2a} \left( \frac{2}{q'} -2 \right) -\frac{1}{2} \right)   \right]^2 > \left( \frac{1}{2a} \left( \frac{2}{q'} -2 \right) -\frac{1}{2} \right)^2 - \left( \frac{1}{a^2} - \frac{1}{a} \left( \frac{2}{q'} - 2 \right) \right).
\end{equation*}

\noindent Continuing to simplify, we eventually arrive at:

\begin{equation*}
q'(1+\gamma^2 a^2-\gamma a^2) + 2(1-q')(a\gamma-a) > 0.
\end{equation*}

\noindent And ultimately:

\begin{equation*}
q' + (1-q') \left( \frac{a\gamma-a}{1+a^2\gamma^2-a^2\gamma}  \right) > \frac{q'}{2}
\end{equation*}

Whence:

\begin{equation}\label{fftt}
q'\frac{(a\gamma)}{2} + \frac{(1-q')}{2} \left( \frac{\gamma^2a^2-\gamma a^2}{1+\gamma^2a^2-\gamma a^2}  \right) > \frac{aq'\gamma}{4}.
\end{equation}

\noindent By Lemma \ref{ll}, we have that:

\begin{itemize}
\item $\log (1+a\gamma) \geq \frac{a\gamma}{2}$, and
\item $\log (1+\gamma^2a^2-\gamma a^2) \geq \frac{\gamma^2a^2-\gamma a^2}{1+\gamma^2a^2-\gamma a^2} $
\end{itemize}

\noindent Using the above inequalities and Equation (\ref{fftt}), we deduce that:

\begin{equation}\label{nana1}
\log (K_2(a, a\gamma, q')) = q'\log(1+a\gamma) + \left( \frac{1-q'}{2} \right) \log(1+\gamma^2a^2-\gamma a^2) > \frac{aq'\gamma}{4}.
\end{equation}

\noindent Hence,

\begin{equation*}
K_2(a,a\gamma, q') = (1+a\gamma)^{q'} (1+\gamma^2a^2-\gamma a^2)^\frac{1-q'}{2} > e^\frac{aq'\gamma}{4} > 1.
\end{equation*}

\noindent Finally, one notes that the function $\gamma$ just has to satisfy $1 > \gamma (a) > \mu_2(a) $ for all $a \in (0,1)$.\\

\noindent The Taylor expansion of $\mu_2(a)$ around $a = 0$ is given by:

\begin{equation*}
\frac{7}{8} + \frac{a}{32} + \frac{3a^2}{512} - \frac{9a^3}{2048} + \frac{7a^4}{16384} + \mathcal{O}(a^5).
\end{equation*}

\noindent In particular,

\begin{equation*}
\lim_{a \rightarrow 0} \mu_2(a) = 0.875.
\end{equation*}

\noindent On the other hand, $\mu_2(1) = \frac{3(\sqrt{13} - 3)}{2} \approx 0.908$.\\

\noindent For ease of notation, from Equation (\ref{argg}) let

\begin{equation*}
t(a) = a^4+4a^3+16a^2+32a+64.
\end{equation*}

\noindent Then:

\begin{equation*}
\mu'_2(a) = \frac{(-a^3-8a^2+24a-64)}{a^3t(a)^\frac{1}{2}} + \frac{(a+8)}{a^3}.
\end{equation*}

\noindent First, a computation via Mathematica shows that

\begin{equation*}
\mu'_2(0.5) \approx 0.034094 > 0.
\end{equation*}

\noindent Secondly, $\mu'_2(a) = 0$ implies, after simplifying, that:

\begin{equation*}
(a+8)^2t(a) = (a^3+8a^2-24a+64)^2.
\end{equation*}

\noindent The real roots of the above equation, computed via Mathematica are \newline $\{ -23.292, -9.7009,0 \}$. Hence $\mu'_2(a) > 0$ on $(0,1)$.\\

\noindent We have thus shown that $\mu_2(a)$ is increasing on $(0,1)$. Furthermore, the function $\gamma (a) := 0.1a + 0.9$ dominates $\mu_2(a)$ for $a \in (0,1)$ as desired.

\end{proof}

\begin{rem}\label{remm}
One notes from Equation (\ref{nana1}) that $\gamma$ was constructed such that $\log (K_2(a,a\gamma, q')) > \frac{a\gamma q'}{4}$.
\end{rem}

\noindent Following the same technique as used to prove Proposition \ref{pip1} above, one can also prove the following:

\begin{pro}\label{pip2} Let $\gamma (a) = 0.1a + 0.9$. Then $K_1(a, a \gamma (a), p') >1$ for all $a \in (0,1)$.
\end{pro}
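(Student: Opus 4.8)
The plan is to mimic the proof of Proposition \ref{pip1} almost verbatim, replacing $q'$ by $p'$ and the factor $K_2$ by $K_1$, and then checking that the dominating function $\gamma(a) = 0.1a + 0.9$ still does the job. Recall $K_1(a,c,p') = (1+c-ac)^{p'}\sqrt{1+c^2-ac}^{\,1-p'}$, so evaluating at $c = a\gamma$ gives $1+c-ac = 1 + a\gamma(1-a)$ and $1+c^2-ac = 1 + a^2\gamma^2 - a^2\gamma = 1 + a^2\gamma(\gamma-1)$. Taking logarithms,
\begin{equation*}
\log K_1(a,a\gamma,p') = p'\log\bigl(1+a\gamma(1-a)\bigr) + \frac{1-p'}{2}\log\bigl(1+a^2\gamma(\gamma-1)\bigr).
\end{equation*}
Since $\gamma \in (0,1)$ the first argument lies in $[0,1]$ for $a\in(0,1)$, so Lemma \ref{ll} gives $\log(1+a\gamma(1-a)) \geq \tfrac12 a\gamma(1-a)$, while the second argument exceeds $-1$, so $\log(1+a^2\gamma(\gamma-1)) \geq \tfrac{a^2\gamma(\gamma-1)}{1+a^2\gamma(\gamma-1)}$ (this term is negative, being bounded below by a negative quantity). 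The goal is then to show the resulting lower bound is strictly positive, i.e. that
\begin{equation*}
p'\,\frac{a\gamma(1-a)}{2} + \frac{1-p'}{2}\cdot\frac{a^2\gamma(\gamma-1)}{1+a^2\gamma(\gamma-1)} > 0,
\end{equation*}
which, after clearing the positive factor $a\gamma/2$, becomes $p'(1-a) + (1-p')\cdot\frac{a(\gamma-1)}{1+a^2\gamma(\gamma-1)} > 0$.

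Next I would reverse-engineer, exactly as in Proposition \ref{pip1}, the quadratic inequality in $\gamma$ that this last display is equivalent to. Multiplying through by the positive denominator $1+a^2\gamma(\gamma-1)$ and collecting powers of $\gamma$ produces an inequality of the form $A(a)\gamma^2 + B(a)\gamma + C(a) > 0$ with $A(a) = (1-p')a^2 > 0$; dividing by $A(a)$ puts it in monic form $\gamma^2 + \beta_1(a)\gamma + \rho_1(a) > 0$. One then defines $\mu_1(a)$ to be the larger root of the corresponding quadratic equation (the analogue of $\mu_2(a)$), verifies by the same sign-of-constant-term argument that the smaller root is negative (so that the inequality holds precisely for $\gamma > \mu_1(a)$ in $(0,1)$), and finally checks that $\gamma(a) = 0.1a + 0.9$ dominates $\mu_1(a)$ on $(0,1)$. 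As in Proposition \ref{pip1} this last step is carried out by computing $\lim_{a\to 0}\mu_1(a)$ and $\mu_1(1)$ explicitly, confirming $\mu_1$ is monotone on $(0,1)$ via its derivative (ruling out critical points in $(0,1)$ using the real roots of $\mu_1'(a)=0$), and observing that the endpoint values lie strictly below those of $\gamma$. Once $1 > \gamma(a) > \mu_1(a)$ is established, the chain of inequalities yields $\log K_1(a,a\gamma,p') > \tfrac{a\gamma p'(1-a)}{4} > 0$ (or a similar positive quantity), hence $K_1(a,a\gamma(a),p') > 1$.

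The main obstacle I anticipate is not conceptual but arithmetic: with $p' = \frac{a/4}{1-a/2} = \frac{a}{4-2a}$, the quotient $2/p' - 2 = \frac{8-4a}{a} - 2 = \frac{8-6a}{a}$ differs from the corresponding quantity for $q'$, so the coefficients $\beta_1(a)$ and $\rho_1(a)$ of the quadratic, and hence the closed form of $\mu_1(a)$, will not be identical to $\beta(a)$, $\rho(a)$ and $\mu_2(a)$ — they must be recomputed carefully, and one must re-verify that $\rho_1(a) < 0$ on $(0,1)$ (which is what guarantees one negative root and gives the clean "holds for all $\gamma > \mu_1(a)$" conclusion). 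There is also a subtlety worth a line of care: because the second logarithm's argument $1 + a^2\gamma(\gamma-1)$ is less than $1$, that log-term is negative, so one must make sure the lower bound from Lemma \ref{ll} is applied in the correct direction (lower-bounding a negative quantity) and that the net inequality still comes out positive — this is exactly the role played by requiring $\gamma > \mu_1(a)$ rather than merely $\gamma < 1$. Modulo these computations, the argument is structurally a carbon copy of Proposition \ref{pip1}, which is presumably why the paper states it can be proved "following the same technique."
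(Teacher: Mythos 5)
Your overall plan coincides with the paper's up to the final step: use Lemma \ref{ll} to reduce $\log K_1(a,a\gamma,p')>0$ to a quadratic inequality in $\gamma$, let $\mu_1(a)$ be the positive root of the corresponding quadratic, and then show that $\gamma(a)=0.1a+0.9$ dominates $\mu_1(a)$ on $(0,1)$. (A minor arithmetic slip along the way: the coefficient of $\gamma^2$ in your cleared inequality is $p'(1-a)a^2$, not $(1-p')a^2$; both are positive, so this does not affect the structure, and you flag the coefficients as needing recomputation anyway.)

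The genuine gap is in how you propose to prove the domination $\gamma(a)>\mu_1(a)$. You plan to copy Proposition \ref{pip1} verbatim: compute the endpoint values of $\mu_1$, verify monotonicity via $\mu_1'$, and ``observe that the endpoint values lie strictly below those of $\gamma$.'' This cannot work here, because $\lim_{a\to 1^-}\mu_1(a)=1=\gamma(1)$: at $a=1$ both the numerator and the denominator of $\mu_1$ vanish, and L'H\^opital (or a direct expansion in $1-a$) gives the limit $1$. So the endpoint comparison at $a=1$ is an equality, not a strict inequality, and in any case ``both functions increasing plus endpoint comparison'' does not exclude a crossing in the interior. This is precisely why the paper abandons the monotonicity argument used for $\mu_2$ and instead proves that $\mu_1$ is \emph{convex} on $(0,1)$ (by showing $\mu_1''>0$, locating the real roots of an associated polynomial equation and checking the sign at a sample point). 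Convexity, combined with $\mu_1(0^+)=\tfrac{7}{8}<0.9$ and $\sup_{(0,1)}\mu_1\le 1$, places $\mu_1$ below the chord $\tfrac{7}{8}+\tfrac{a}{8}$, which lies below $0.9+0.1a$ for $a<1$. Some argument of this kind (or a direct proof that $\gamma-\mu_1>0$ near $a=1$ with the correct rates) is needed; the route you describe stalls exactly at the point that carries the content of the proposition.
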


\begin{proof}

\noindent Since a considerable part of the proof strategy is very much like the that used in the proof of the previous proposition, we omit most of the details and only highlight the relevant parts of the argument.\\

\noindent Let $\mu_1(a)$ be:
 
\begin{equation*}
\mu_1(a) = \frac{(-a^3+a^2+6a-8) + (a^6-2a^5+9a^4-20a^3+48a^2-96a+64)^\frac{1}{2}}{2a^2(1-a)}
\end{equation*}

\noindent This expresses the (only) positive root of a quadratic equation constructed such that $0< \mu_1(a) < 1$ for all $a \in (0,1)$, and for some function $\rho (a)$ if,

\begin{equation*}
0 < \mu_1(a) \leq \rho(a) <1,
\end{equation*}

\noindent then:

\begin{equation*}
\log (K_1 (a, a\rho(a),p')) > \frac{a(1-a)p' \rho(a)}{4} > 0. \ \text{(Compare with Remark \ref{remm} )}.
\end{equation*}

\noindent The main objective is to show that $\gamma(a)$ as defined in the previous proposition suffices for the role of $\rho(a)$ as described above. That is, $\gamma (a) \geq \mu_1(a)$ for all $a \in (0,1)$.\\

\noindent First, we note that by construction:

\begin{equation*}
\sup_{a \in (0,1)} \mu_1(a) \leq 1.
\end{equation*}

\noindent The Taylor expansion of $\mu_1(a)$ around $a = 0$ is given by:

\begin{equation*}
\frac{7}{8} + \frac{a}{32} + \frac{19a^2}{512} + \frac{23a^3}{2048} + \frac{127a^4}{16384} + \mathcal{O}(a^5).
\end{equation*}

\noindent In particular,

\begin{equation*}
\lim_{a \rightarrow 0} \mu_1(a) = \frac{7}{8}.
\end{equation*}


%
%
%
%
%
%
%
%
%
%
%
%

\noindent \textbf{Claim:} The function $\mu_1(a)$ is convex on $(0,1)$.\\

\noindent We have to show that $\mu''_1(a)>0$ on $(0,1)$.\\

\noindent For ease of notation, let 

\begin{equation*}
g(a) = a^6-2a^5+9a^4-20a^3+48a^2-96a+64
\end{equation*}

\noindent Then:

\begin{equation*}
\mu'_1(a) = \frac{4a^5-15a^4+53a^3-144a^2+168a-64}{a^3(1-a)^2 g(a)^\frac{1}{2}} + \frac{6a^2-15a+8}{a^3(1-a)^2}.
\end{equation*}

\noindent It follows that:

\begin{equation*}
\mu''_1(a) = \frac{2}{a^4(1-a)^3 g(a)^\frac{3}{2}} (h(a) + (9a^3-33a^2+35a-12)g(a)^\frac{3}{2}),
\end{equation*}

\noindent where

\begin{equation*}
\begin{split}
h(a) = & \ 6a^{12} -42a^{11} +238a^{10}-1035a^9 +3273a^8 - 8639a^7 +19791a^6\\ 
&-39552a^5+66928a^4-84672a^3+69312a^2-31744a+6144.
\end{split}
\end{equation*}

\noindent Then, $\mu_1''(a) = 0$ implies that

\begin{equation*}
h(a) = - (9a^3-33a^2+35a-12)g(a)^\frac{3}{2},
\end{equation*}

\noindent and hence

\begin{equation}\label{nn3}
h(a)^2 - (9a^3-33a^2+35a-12)^2g(a)^3 = 0.
\end{equation}

\noindent The real roots of the polynomial (\ref{nn3}) are $\{-1.9111, 0, 1, 1.1878, 2.1974 \}$.\\

\noindent Finally, 

\begin{equation*}
\mu''_1(0.5) \approx 0.176399 > 0.
\end{equation*}

\noindent We have thus shown that $\mu_1(a)$ is convex on $(0,1)$. For all $a \in (0,1)$, $\mu_1(a)$ is dominated by the straight line passing through the points $(0,0.9)$ and $(1,1)$.\\

Hence $\gamma(a) > \mu_1(a)$ for $a \in (0,1)$ as desired.

\end{proof}

\noindent \textbf{Observation 2}: It is desirable to take stock of the preceding discussion at this moment. The reader is reminded that the lower bound of $n$ required to obtain the conclusion of D\'{e}got's \textbf{Theorem 7} is the previously defined $N_1$ from his \textbf{Theorem 5} (in our case { Lemma \ref{l4}}). We have already obtained the explicit analogue of this bound in the form of $\mathcal{N}_1(a)$. Hence, as it stands, we have all the necessary ingredients to obtain the conclusion of D\'{e}got's \textbf{Theorem 7}.\\

\noindent However, since our ultimate goal is to obtain an explicit $\mathcal{N}(a)$ independent of all the other implicit parameters, it is worthwhile to remark on the new parameters that were introduced in preparation for Lemma \ref{l6}.

\begin{itemize}
\item $p$ is defined as $p = \frac{\frac{a}{2}-m}{1-\frac{a}{2}} = p(a,m)$. The dependence on $m$ is avoided by the same argument that led to the introduction of $q'(a)$. We simply define the alternative quantity $p' = \frac{\frac{a}{4}}{1-\frac{a}{2}} = p'(a)$ and invoke the quantity $\mathcal{N}_0(a)$ to ensure a high enough degree bound such that the results work.
\item The parameter $r$ is defined in terms of $a \in (0,1)$ and $c \in (0,a)$ as $r = \frac{c(a-c)}{2(1-c^2)}$. The role of the function $\gamma$ as defined in Proposition \ref{pip1} comes into play here. We will define $c$ to be $ a \gamma (a)$, thus obtaining $r = r(a)$, a function depending only on $a \in (0,1)$.
\item Similar reasoning as above applies to the quantity $\alpha = \frac{\log (\frac{a}{16})}{\log \left( \frac{c+r}{1+cr}  \right)}$.
\end{itemize}

\noindent That being said, we arrive at our version of  D\'{e}got's \textbf{Theorem 7} which depends only on $a \in (0,1)$. We restate the conclusion here for the sake of continuity:\\

\begin{theorem}\label{T2} Suppose $P(z)$ contradicts Sendov's conjecture at $a \in (0,1)$. Let $c = a \gamma (a)$. If $\deg (P(z)) = n \geq \mathcal{N}_1(a)$, then:

\begin{equation*}
|P(c)|\geq \frac{(1-c)(a-c)}{1-ac} r^\alpha {K'}^{n-1}.
\end{equation*}
\end{theorem}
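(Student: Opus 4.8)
The plan is to obtain this statement as a corollary of D\'egot's Lemma \ref{l6} by carefully tracking how the substitution $m \rightsquigarrow a/4$ and the explicit choice $c = a\gamma(a)$ interact with the hypotheses and conclusion of that lemma. First I would invoke Proposition \ref{pro1}: since $n \geq \mathcal{N}_1(a) \geq \mathcal{N}_0(a)$, the degree bound guarantees $m \leq a/4$, so in particular $m < a/2$ and all of $p, q, p', q'$ are positive. This is the step that makes the parameter substitution legitimate: with $m \le a/4$ we have $q = \frac{a/2 - m}{1 + a/2} \geq \frac{a/4}{1+a/2} = q'$ and likewise $p \geq p'$. Also $n \geq \mathcal{N}_1(a) \geq N_1$ by the definition $\mathcal{N}_1(a) = \max\{N_1'(a), \mathcal{N}_0(a)\}$ together with the fact that $N_1'(a)$ was built precisely so that Inequality (\ref{d32}) holds with $q'$ in place of $q$ (which, since $q \le q'$ and the base is $<1$, is a stronger requirement than the original one defining $N_1$); hence the hypothesis $n \geq N_1$ of Lemma \ref{l6} is met.

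Next I would compare $K'$ with D\'egot's $K$. Using the monotonicity relations recorded in the excerpt, namely $K_1(a,c,p_1) \geq K_1(a,c,p_2)$ and $K_2(a,c,q_1) \geq K_2(a,c,q_2)$ whenever $p_1 \geq p_2 > 0$ and $q_1 \geq q_2 > 0$, together with $p \geq p'$ and $q \geq q'$, I get $K_1(a,c,p) \geq K_1(a,c,p')$ and $K_2(a,c,q) \geq K_2(a,c,q')$, hence $K = \min\{K_1(a,c,p), K_2(a,c,q)\} \geq \min\{K_1(a,c,p'), K_2(a,c,q')\} = K'$. Since $n - 1 \geq 0$, it follows that $K^{n-1} \geq (K')^{n-1}$, so replacing $K$ by $K'$ only weakens the lower bound in Lemma \ref{l6}, and the inequality
\begin{equation*}
|P(c)| \geq \frac{(1-c)(a-c)}{1-ac}\, r^\alpha K^{n-1} \geq \frac{(1-c)(a-c)}{1-ac}\, r^\alpha (K')^{n-1}
\end{equation*}
remains valid. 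Here $c = a\gamma(a) = a(0.1a + 0.9) \in (0,a)$ since $\gamma(a) \in (0.9, 1)$ for $a \in (0,1)$, so $c$ is an admissible choice of the point in $(0,a)$, and $r = r(a)$, $\alpha = \alpha(a)$, $K' = K'(a)$ become functions of $a$ alone. One should also note that $a - c = a(1 - \gamma(a)) > 0$ and $r > 0$, so the right-hand side is a genuine positive quantity and $r^\alpha$ is well-defined.

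The only real content beyond bookkeeping is verifying that the stated choice $c = a\gamma(a)$ is legitimate input to Lemma \ref{l6} — i.e. that nothing in D\'egot's proof of Theorem 7 secretly required $c$ to vary or to satisfy constraints not visible in the statement — and that $K' > 1$, which is what makes the bound useful downstream; the latter is exactly Propositions \ref{pip1} and \ref{pip2}, which give $K_1(a, a\gamma(a), p') > 1$ and $K_2(a, a\gamma(a), q') > 1$, hence $K' = \min\{K_1, K_2\} > 1$. I expect the main obstacle to be purely expository rather than mathematical: one must state precisely which inequalities in D\'egot's original argument used $q$ versus $N_1$, and check that each survives the substitution $q \mapsto q'$, $p \mapsto p'$ (monotonicity handles all of them because every such quantity sits with an exponent and the relevant bases are on the correct side of $1$). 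Since the excerpt explicitly says Theorem \ref{T2} is "\textit{mutatis mutandis} a restatement of Lemma \ref{l6}," the proof is essentially the assembly of these observations, and I would keep it short, citing Proposition \ref{pro1} for $m \le a/4$, the monotonicity of $K_1, K_2$ for $K \geq K'$, and Propositions \ref{pip1}--\ref{pip2} for positivity/$K' > 1$.
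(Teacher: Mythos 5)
Your proposal is correct and follows essentially the same route as the paper, which presents Theorem \ref{T2} as an immediate consequence of Lemma \ref{l6} together with the preceding discussion ($n\geq\mathcal{N}_1(a)\geq\mathcal{N}_0(a)$ forces $m\leq a/4$, hence $p\geq p'$ and $q\geq q'$, hence $K\geq K'$ by the recorded monotonicity of $K_1,K_2$, and $n\geq\mathcal{N}_1(a)$ guarantees the hypothesis $n\geq N_1$); your write-up simply makes that assembly explicit. One small slip: in the parenthetical about the $q'$-version of Inequality (\ref{d32}) being the stronger requirement, ``since $q\leq q'$'' should read ``since $q\geq q'$'', consistent with what you established earlier in the same paragraph.
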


\noindent Before proceeding, let us take yet another closer look at these parameters. This analysis will prove useful and simplify notation in the result that follows thereafter.

\begin{itemize}
\item the quantity $r$ is defined as $r = \frac{c(a-c)}{2(1-c^2)}$. It can be shown that $0<r<1$, hence $\log(r) < 0$. Also, since $1-c^2 < 1$, in later analysis we can replace $r$ with $r' := \frac{c(a-c)}{2}$. Theorem \ref{T2} will still be true for $r'$ since $0 < r' < r <1$, and, as we will show below, $\alpha > 0$.



\item $\log \left( \frac{1+a}{a-c} \right) > 0$ and always defined. 

\item $\log \left( \frac{1-ac}{1-c} \right) > 0$ and always defined. 

\item Also, $0 < \frac{c+r}{1+cr} <1$, hence:

%

\begin{equation*}
\alpha := \alpha (r) = \frac{\log (\frac{a}{16})}{\log \left( \frac{c+r}{1+cr}  \right)} > 0.
\end{equation*}

\noindent Furthermore, $\frac{c+r}{1+cr} > \frac{c+r'}{1+cr'}$, and therefore:

\begin{equation*}
\alpha' := \alpha (r') > \alpha (r).
\end{equation*}

\item Finally, we have shown that we can express $c$ explicitly in terms of $a$. Furthermore, this $c$ is sufficiently close to $a$ such that $K'>1$. Hence $\log(K') >0$.

\end{itemize}

\noindent We define the function $N_3(a,c)$ by:

\begin{equation*}
N_3(a,c) = \frac{\log \left( \frac{1+a}{a-c} \right) + \log \left( \frac{1-ac}{1-c} \right) - \alpha \log(r)}{\log (K')} + 1.
\end{equation*}

\noindent All the above analysis culminates in the following definition of the final degree bound, which we denote by $\mathcal{N}_3(a)$ as follows:

\begin{equation*}
\mathcal{N}_3(a) = \max \{ \mathcal{N}_0(a), N_3(a,a \gamma (a)) \}.
\end{equation*}

\section{Improvement of D\'{e}got's Theorem 8}

\noindent We begin this section with our analogue of D\'{e}got's main result.

\begin{theorem}\label{T3} Let $P(z) = (z-a)\prod_{j=1}^{n-1}(z-z_j)$, with $a \in (0,1)$, $|z_j| \leq 1$ for all $j = 1, \ldots , n-1$, where $n \geq 2$. If:

\begin{equation*}
\deg P(z) = n > \mathcal{N}'(a) := \max \{ \mathcal{N}_1(a), \mathcal{N}_2(a, a \gamma (a)), \mathcal{N}_3(a) \},
\end{equation*}

then $P'(z)$ has a zero in the disk $|z-a|\leq 1$.
\end{theorem}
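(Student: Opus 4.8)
The plan is to assemble Theorem \ref{T3} by combining the upper and lower bounds on $|P(c)|$ already established, with $c$ chosen as the explicit function $c = a\gamma(a)$ from Propositions \ref{pip1} and \ref{pip2}. First I would argue by contradiction: suppose $P(z)$ has no critical point in the disk $|z-a|\le 1$, i.e. $P(z)$ contradicts Sendov's conjecture at $a$. Since $n > \mathcal{N}'(a) \ge \mathcal{N}_1(a)$ and $n > \mathcal{N}_2(a, a\gamma(a))$, Theorem \ref{T1} applies and gives the upper bound $|P(c)| \le 1+a$. Simultaneously, since $n > \mathcal{N}_1(a)$, Theorem \ref{T2} (our $m$-free restatement of D\'egot's Theorem 7) gives the lower bound
\begin{equation*}
|P(c)| \geq \frac{(1-c)(a-c)}{1-ac}\, r^{\alpha} {K'}^{n-1},
\end{equation*}
and by the monotonicity observations preceding the definition of $N_3(a,c)$ we may replace $r$ by $r' = \tfrac{c(a-c)}{2}$ and $\alpha$ by $\alpha' = \alpha(r')$ without harming the inequality, since $0 < r' < r < 1$ and $\alpha' > \alpha > 0$.

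Next I would show that $n > \mathcal{N}_3(a) \ge N_3(a, a\gamma(a))$ forces ${K'}^{n-1}$ to be so large that the lower bound exceeds $1+a$, contradicting the upper bound. Concretely, the inequality
\begin{equation*}
\frac{(1-c)(a-c)}{1-ac}\, {r'}^{\alpha'} {K'}^{n-1} > 1+a
\end{equation*}
is equivalent, after taking logarithms and using $\log(K') > 0$ (guaranteed by $K' \ge \min\{K_1(a,a\gamma,p'), K_2(a,a\gamma,q')\} > 1$ from Propositions \ref{pip1} and \ref{pip2}), to
\begin{equation*}
n - 1 > \frac{\log\!\left(\frac{1+a}{a-c}\right) + \log\!\left(\frac{1-ac}{1-c}\right) - \alpha' \log(r')}{\log(K')},
\end{equation*}
which is exactly $n > N_3(a, a\gamma(a))$ with $r', \alpha'$ in place of $r, \alpha$. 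Hence $n > \mathcal{N}_3(a)$ yields $|P(c)| > 1+a$, contradicting Theorem \ref{T1}. Therefore $P'(z)$ must have a zero in $|z-a| \le 1$. The hypothesis $n > \mathcal{N}_0(a)$ (absorbed into each of $\mathcal{N}_1, \mathcal{N}_2, \mathcal{N}_3$) is what legitimizes the use of $q', p'$ in place of $q, p$ throughout, since it guarantees $m \le \tfrac{a}{4}$.

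The main obstacle I anticipate is bookkeeping rather than conceptual: one must verify carefully that every quantity appearing in $N_3$ is well-defined and has the expected sign for $c = a\gamma(a)$ with $a \in (0,1)$ — in particular that $a - c = a(1 - \gamma(a)) = a(0.1)(1-a) > 0$, that $1 - ac > 0$, and that $r' < 1$ — and that the replacement of $(r,\alpha)$ by $(r',\alpha')$ genuinely preserves the direction of the inequality (it does, because $x \mapsto x^{\alpha}$ with $\alpha > 0$ is increasing and $r' < r$ decreases the base while $\alpha' > \alpha$ increases the exponent on a base less than $1$ — so one should check that the net effect ${r'}^{\alpha'} \le r^{\alpha}$ still leaves enough room, or more simply note that Theorem \ref{T2}'s bound with $(r',\alpha')$ is itself a valid lower bound as already asserted in the excerpt). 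A secondary point is to confirm that the strict inequality $n > \mathcal{N}'(a)$ (rather than $\ge$) is what is needed to turn the non-strict bounds of Theorems \ref{T1} and \ref{T2} into the strict contradiction $1 + a < |P(c)| \le 1+a$; this is why the theorem is stated with a strict inequality. Once these sign and monotonicity checks are in place, the contradiction is immediate and the proof closes.
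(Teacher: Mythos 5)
Your strategy is exactly the paper's: assume $P'$ has no zero in $|z-a|\le 1$, take $c=a\gamma(a)$, combine the upper bound $|P(c)|\le 1+a$ from Theorem \ref{T1} (legitimate since $n>\mathcal{N}_1(a)$ and $n>\mathcal{N}_2(a,a\gamma(a))$) with the lower bound $|P(c)|\ge \frac{(1-c)(a-c)}{1-ac}\,r^{\alpha}{K'}^{n-1}$ from Theorem \ref{T2}, take logarithms using $\log(K')>0$, and read off $n\le N_3(a,a\gamma(a))\le\mathcal{N}_3(a)$, contradicting $n>\mathcal{N}'(a)$. That is the entire content of the paper's proof, and your write-up captures it.

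The one place you deviate is in substituting $(r',\alpha')$ for $(r,\alpha)$ \emph{inside} the contradiction step, and as written this breaks the logic. The quantity $\mathcal{N}_3(a)$ is defined from the \emph{unprimed} $N_3(a,c)$, whose numerator contains $-\alpha\log(r)$. Passing to the weaker lower bound $\frac{(1-c)(a-c)}{1-ac}\,{r'}^{\alpha'}{K'}^{n-1}$ and rearranging yields only $n\le N_3'$, where $N_3'$ is the primed analogue; since the whole point of the substitution is that the primed bound is weaker, one expects $N_3'\ge N_3$, and the hypothesis $n>\mathcal{N}_3(a)\ge N_3$ then does not give $n>N_3'$, so the contradiction does not close. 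The repair is simply to omit the substitution: the paper keeps $(r,\alpha)$ throughout this proof, and the primed quantities enter only afterwards, when $N_3(a,a\gamma(a))$ is over-estimated to extract the closed-form $\mathcal{N}(a)=20800/(a^{7}(1-a)^{4})$. (A side caution: the inequality $\alpha'>\alpha$ that you quote from the paper is itself dubious — both $\log(a/16)$ and $\log\bigl(\frac{c+r}{1+cr}\bigr)$ are negative, and replacing $r$ by $r'<r$ makes the denominator \emph{more} negative, which decreases the ratio; so leaning on that monotonicity claim is another reason to stay with the unprimed quantities here.) With that single change your argument coincides with the paper's proof.
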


\begin{proof}
We follow D\'{e}got's approach:\\

\noindent Let $c = a \gamma (a)$ and suppose to the contrary, that $P'(w) \neq 0$ for all $w \in |z-a|\leq 1$. From Theorems \ref{T1} and \ref{T2}, we get:

%
%
%

\begin{equation*}
1+a \geq |P(c)| \geq \frac{(1-c)(a-c)}{1-ac} r^\alpha {K'}^{n-1}.
\end{equation*}

\noindent This implies that:

\begin{equation*}
\frac{(1-c)(a-c)}{(1-ac)(1+a)} r^\alpha {K'}^{n-1} \leq 1.
\end{equation*}

%

\noindent Therefore,

\begin{equation*}
(n-1) \log (K') \leq \log \left( \frac{1-ac}{1-c} \right) + \log \left( \frac{1+a}{a-c} \right) - \alpha \log (r).
\end{equation*}

\noindent Hence,

\begin{equation*}
n \leq \frac{\log \left( \frac{1+a}{a-c} \right) + \log \left( \frac{1-ac}{1-c} \right) - \alpha  \log(r)}{\log (K')} + 1 \leq  \mathcal{N}_3(a).
\end{equation*}

\noindent This contradicts the assumption on the degree of $P(z)$.\\

\noindent Hence $P'(w) = 0$ for some $w \in |z-a|\leq 1$.

\end{proof}

\subsection{The explicit function $\mathcal{N}(a)$}

\noindent Thus far, we have all the ingredients that go into constructing the function $\mathcal{N}(a)$. For the sake of completeness we bring them together and express them under one formula. First, for the reader's convenience, we recall the following formulae:\\

\begin{itemize}
\item $\mathcal{N}_0(a) := \frac{32 \log \left( \frac{40}{a^2} \right)}{a^2}$ and hence $\mathcal{N}_0(a) \leq \frac{32 \left( \frac{40}{a^2} \right) }{a^2}  = \frac{1280}{a^4}$,
\item $\mathcal{N}_1(a) := \max \left \{  9 \left(  \frac{4+2a}{a} \right)^2,  \mathcal{N}_0(a)  \right\} \leq  \max \left \{ \frac{324}{a^2} ,  \mathcal{N}_0(a)  \right\}$,
\item $\mathcal{N}_2(a) := \max \left \{ 9 \left(   \frac{\log \left( \frac{a}{16} \right)}{\log \left(  \frac{0.1a^2 + 0.9a}{1+a} \right) }  \right)^2 ,  \mathcal{N}_0(a)  \right\} $. On the interval $(0,1)$, we have that $\log \left(  \frac{1+a}{0.1a^2+0.9a} \right) \geq \frac{2a^2}{5}$. Hence $\mathcal{N}_2(a) \leq \max \left\{ \frac{5760}{a^2}, \mathcal{N}_0(a) \right\}$.
\item We therefore have that $\mathcal{N}_0(a), \mathcal{N}_1(a), \mathcal{N}_2(a) \leq \frac{5760}{a^4}$ for all $a \in (0,1)$.
\end{itemize}

\noindent Before proceeding, let us do a further analysis on the function $\mathcal{N}_3(a)$.\\

\noindent Recall that $\mathcal{N}_3(a) := \max \{ \mathcal{N}_0(a), N_3(a,a \gamma (a)) \}$ where:

\begin{equation*}
N_3(a,a \gamma (a)) = N_3(a) = \frac{\log \left( \frac{1+a}{a-a\gamma} \right) + \log \left( \frac{1-a^2\gamma}{1-a\gamma} \right) - \alpha \log(r)}{\log (K')} + 1.
\end{equation*} 

\noindent We would like to replace $N_3(a)$ with a larger estimate. First we note that:

\begin{equation*}
N_3(a) \leq \frac{\log \left( \frac{1+a}{a-a\gamma} \right) + \log \left( \frac{1-a^2\gamma}{1-a\gamma} \right) - \alpha' \log(r')}{\log (K')} + 1
\end{equation*}

\noindent where:

\begin{equation*}
\alpha' = \frac{\log (\frac{a}{16})}{\log \left( \frac{a\gamma+r'}{1+ar' \gamma}  \right)} \ \ \text{and} \ \ r' =\frac{a^2 (1-\gamma)\gamma}{2}.
\end{equation*}

\noindent Since $\log (x) \leq x-1$ for $x > 0$, we have that:

\begin{itemize}
\item $\log \left( \frac{1+a}{a-a\gamma} \right) \leq \left( \frac{1+a}{a-a\gamma} \right) \leq \frac{2}{a - a\gamma}$,
\item $\log \left( \frac{1-a^2\gamma}{1-a\gamma} \right) \leq \left( \frac{1-a^2\gamma}{1-a\gamma} \right) \leq \frac{1}{a-a\gamma}$,
\item $\log \left( \frac{1}{r'}  \right) \leq \frac{1}{r'} = \frac{2}{a^2 (1-\gamma)\gamma} \leq \frac{2}{a^3(1-\gamma)}$ since $a < \gamma$.
\end{itemize}

\noindent One can show that $\frac{1+ar'\gamma}{a\gamma +r'} > \frac{1}{\sqrt{a}}$ for $a \in (0,1)$. Hence:

\begin{equation*}
\alpha' \leq \left. \frac{16}{a} \right/ \frac{1}{2} \log \left( \frac{1}{a} \right) = \frac{32}{a \log \left( \frac{1}{a} \right)}.
\end{equation*}

\noindent Recall that $\gamma = 0.1a + 0.9 $ was constructed such that for $a \in (0,1)$,

\begin{equation*}
\log (K') > \min \left \{ \frac{a^2 \gamma}{4(4+2a)} ;  \frac{a^2 (1-a)\gamma}{4(4-2a)} \right \} = \frac{a^2 (1-a) \gamma}{4(4-2a)} \geq \frac{a^2 (1-a) \gamma}{16} \geq \frac{a^3(1-a)}{16}.
\end{equation*}

\noindent We thus have that:

\begin{align*}
N_3(a) & \leq \left( \frac{3}{a (1-\gamma)} + \frac{2}{a^3(1-\gamma)} \left( \frac{32}{a \log \left( \frac{1}{a} \right)}  \right)    \right) \frac{16}{a^3(1-a)} + 1\\
& = \left(   \frac{3 a^3\log \left( \frac{1}{a} \right) + 64 }{a^4 (1-\gamma)\log \left( \frac{1}{a} \right)}   \right) \frac{16}{a^3(1-a)} + 1.
\end{align*}

\noindent We note that $\lim_{a \rightarrow 0} a^3 \log (\frac{1}{a}) = 0$. Also, the function $a^3 \log (\frac{1}{a})$ attains its global maximum of $\frac{1}{3e}$ at $a = \frac{1}{\sqrt[3]{e}}$. Furthermore, $1-\gamma = 0.1(1-a)$. We conclude that:

\begin{equation*}
N_3(a) \leq \left(  \frac{650}{a^4(1-a) \log (\frac{1}{a}) }  \right) \frac{32}{a^3(1-a)} = \frac{20800}{a^7(1-a)^2 \log (\frac{1}{a})}.
\end{equation*}

\noindent For $a \in (0,1)$, $\log (\frac{1}{a}) \geq (1-a)^2$. Hence:

\begin{equation*}
N_3(a) \leq \frac{20800}{a^7(1-a)^4}.
\end{equation*}

\noindent Let $\mathcal{N}(a)$ be given by:

\begin{equation*}
\mathcal{N}(a) = \frac{20800}{a^7(1-a)^4}.
\end{equation*}

\subsection*{Towards uniformity}


\noindent In \cite{Degot}, D\'{e}got concludes by asking for a degree bound $N \in \mathbb{N}$ which is independent of $a \in (0,1)$, or at least an explicit formula $N(a)$.

\noindent We note that, the formula $\mathcal{N}(a)$ defined above suffices for the latter request.\\


\noindent Furthermore, for any $ 0<\alpha < \beta <1$, the extreme value theorem tells us that $\mathcal{N}(a)$ has a maximum on $[\alpha , \beta]$.\\





\noindent The table below compares values of $N$ D\'{e}got computed for certain values of $a$ with the corresponding rounded up approximate values of $\mathcal{N}(a)$.\\

\begin{tabu} to 0.6\textwidth { | X[c] | X[c] | X[c] | }
 \hline
 $a$ & D\'{e}got's $N$ & $\mathcal{N}(a)$ \\
 \hline
0.1  & 15064  & $3.4 \times 10^{11}$  \\
\hline
0.2 & 3587 & $4 \times 10^9$ \\
 \hline
0.3  & 1654  & $4 \times 10^8$  \\
\hline
0.4 & 1004 & $9.8 \times 10^7$ \\
 \hline
0.5  & 718  & $4.3 \times 10^7$  \\
\hline
0.6 & 563 & $3 \times 10^7$ \\
 \hline
0.7  & 560  & $3.2 \times 10^7$  \\
\hline
0.8 & 616 & $6.2 \times 10^7$ \\
 \hline
0.9  & 1006  & $4.4 \times 10^8$  \\
\hline
\end{tabu}

\begin{rem}
It is evident that the above values produced by $\mathcal{N}(a)$ are several orders of magnitude worse than those obtained by D\'{e}got's procedure. We however have the advantage of an explicit formula that works for all $a \in (0,1)$.
\end{rem}

\subsection*{Related results}

\noindent Whilst writing the current paper, we came across the following result which is similar in nature to what is obtained here. In what follows, given a polynomial $P$ with a zero at $z = a$, let $d(a, P)$ denote the distance from $a$ to the nearest critical point of $P$, and $r(a,P)$ denote the distance from $a$ to the nearest \textit{other} zero of $P$.

\begin{theorem}{(T-S Small, Theorem 1(b), \cite{Small}, p.~217)}\label{tssh}
Let $P$ be a polynomial of degree $n \geq 2$ with all its critical points in $\{ |z| \leq 1  \}$, with $P(a) = 0$. Let $d = d(a,P)$ and $r = r(a, P)$. If $|a| \leq 1$ then either:

\begin{equation*}
d \leq \sqrt{1 - |a|^2} \ \ \text{or} \ \ n \leq 2 + \frac{12|a|^2d^2 (r^2 + 4d^2)}{r^2 (|a|^2 + d^2 -1)^2}.
\end{equation*}

\end{theorem}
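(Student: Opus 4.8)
\noindent The plan is to recast the dichotomy as a comparison of two estimates for the quantity $P''(a)/P'(a)$, following the argument in \cite{Small}.

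\noindent First I would normalise and dispose of degenerate cases. A rotation of the plane preserves $|a|$, $d$, $r$ and the hypothesis that all critical points lie in $\{|z|\le1\}$, so I may assume $a=|a|\in[0,1]$. If $a=0$ then every critical point, lying in $\{|z|\le1\}$, is within $1=\sqrt{1-|a|^{2}}$ of $a$, so the first alternative holds. If $a$ is a multiple zero of $P$ then $a$ is itself a critical point, whence $d=0\le\sqrt{1-|a|^{2}}$. The case $n=2$ is checked directly (there $d=r/2$). So from now on assume $a\in(0,1]$, that $a$ is a simple zero, $n\ge3$, and — aiming at the second alternative — that $d^{2}>1-a^{2}$; put $\delta_{0}:=\dfrac{a^{2}+d^{2}-1}{2a}>0$.

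\noindent Second, the geometry. Since $\{|z-a|<d\}$ contains no critical point while all critical points lie in $\{|z|\le1\}$, every critical point $w_{k}$ of $P$ lies in the lune $\{|z|\le1\}\setminus\{|z-a|<d\}$. The radical axis of the circles $|z|=1$ and $|z-a|=d$ is the line $\Re z=a-\delta_{0}$, and the inequality $|z|^{2}-|z-a|^{2}\le1-d^{2}$, valid on the lune, shows the lune lies in the half-plane $\{\Re z\le a-\delta_{0}\}$. Hence $\Re(a-w_{k})\ge\delta_{0}$ and $d\le|a-w_{k}|\le1+a$ for every $k$, so that
$$
\Re\frac{1}{a-w_{k}}=\frac{\Re(a-w_{k})}{|a-w_{k}|^{2}}\ \ge\ \frac{\delta_{0}}{(1+a)^{2}}.
$$
Writing $P(z)=(z-a)Q(z)$ and using $P'(z)=n\prod_{k}(z-w_{k})$ together with $P'(a)=Q(a)$ and $P''(a)=2Q'(a)$, one obtains the identity
$$
\frac{P''(a)}{P'(a)}=\sum_{k}\frac{1}{a-w_{k}}=2\,\frac{Q'(a)}{Q(a)}=2\sum_{j}\frac{1}{a-z_{j}},
$$
where $z_{j}$ ranges over the zeros of $P$ other than $a$. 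Summing the previous display over the $n-1$ critical points yields the lower bound
$$
\Re\frac{P''(a)}{P'(a)}\ \ge\ \frac{(n-1)\,\delta_{0}}{(1+a)^{2}}.
$$
This is the step in which the degree $n$ enters.

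\noindent The remaining task — and the main obstacle — is a matching \emph{upper} bound for $\bigl|P''(a)/P'(a)\bigr|=2\bigl|\sum_{j}1/(a-z_{j})\bigr|$ involving only $a$, $d$ and $r$; combining it with the lower bound and solving for $n$ then gives $n\le2+\dfrac{12|a|^{2}d^{2}(r^{2}+4d^{2})}{r^{2}(|a|^{2}+d^{2}-1)^{2}}$. The crude estimate $|a-z_{j}|\ge r$ does not suffice, as it leaves $n-1$ uncancelled terms; one must instead use the full hypothesis through the nearest critical point $w$ (so $|a-w|=d$ and $P'(w)=0$) and the nearest other zero $z_{2}$ (so $P(z_{2})=0$ and $|a-z_{2}|=r$). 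The relevant identity is the critical-point equation $\sum_{j}\frac{1}{w-z_{j}}=\frac{1}{a-w}$, of modulus $1/d$: playing $\sum_{j}\frac{1}{a-z_{j}}$ off against $\sum_{j}\frac{1}{w-z_{j}}$ — equivalently, analysing the local behaviour of $P$, $P'$, $P''$ at the three points $a$, $w$, $z_{2}$, or the auxiliary polynomial built from $P$ and its reflection through $w$ — allows one to replace the contribution of the far-away zeros by an expression in $d$ and $r$, the combination $r^{2}+4d^{2}$ arising from the triangle on $a$, $z_{2}$ and the reflection $2w-a$ of $a$ in $w$. I expect this cancellation — showing the upper estimate is independent of the positions of the remaining zeros, and isolating the factor $r^{-2}(r^{2}+4d^{2})$ — to be the delicate point; the rest is routine bookkeeping, after which the lower and upper bounds for $\Re\,P''(a)/P'(a)$ combine to give the claimed inequality.
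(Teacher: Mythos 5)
This theorem is not proved in the paper at all: it is quoted verbatim from Sheil-Small's book as a ``related result'', so there is no internal proof to compare against. Judged on its own merits, your proposal has a genuine gap. The first half is fine: the lune $\{|z|\le 1\}\setminus\{|z-a|<d\}$ does lie in the half-plane $\Re z\le a-\delta_0$ with $\delta_0=\frac{a^2+d^2-1}{2a}$, and summing $\Re\frac{1}{a-w_k}\ge\frac{\delta_0}{(1+a)^2}$ over the $n-1$ critical points gives a legitimate lower bound on $\Re\,P''(a)/P'(a)$ that grows linearly in $n$. But the second half --- the matching upper bound in terms of $a$, $d$, $r$ alone, which is where the constant $12$ and the factor $(r^2+4d^2)/r^2$ must come from --- is not an argument; it is a description of the kind of argument you hope exists (``playing $\sum_j 1/(a-z_j)$ off against $\sum_j 1/(w-z_j)$ \ldots I expect this cancellation \ldots to be the delicate point''). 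That cancellation is the entire content of the theorem, and deferring it leaves the proof unwritten.

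There is also a structural reason to doubt that the skeleton closes as stated. Your lower bound yields $n-1\le\frac{(1+a)^2}{\delta_0}\bigl|P''(a)/P'(a)\bigr|$, i.e.\ only one power of $\delta_0$ in the denominator, whereas the target bound $n\le 2+\frac{12a^2d^2(r^2+4d^2)}{r^2(a^2+d^2-1)^2}=2+\frac{3d^2(r^2+4d^2)}{r^2\delta_0^2}$ requires $\delta_0^2$. So the missing upper bound on $\bigl|P''(a)/P'(a)\bigr|$ would itself have to contribute a factor $1/\delta_0$ (and absorb the $(1+a)^2$), which contradicts your stated aim of bounding it ``in terms of $d$ and $r$'' only. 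Either the upper estimate must be taken at the nearest critical point $w$ rather than at $a$ (so that the lune geometry enters twice), or the whole dichotomy must be organised around a different quantity; as written, the two halves cannot be combined to produce the claimed inequality. You should either consult Sheil-Small's actual argument (Section 6.5 of \cite{Small}) or rework the estimate so that the powers of $a^2+d^2-1$ match.
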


\noindent The main difference is that our bound does not depend on $r(a,P)$, a quantity that may change given a different polynomial. Indeed, an explicit bound on $r(a,P)$ defines a family of polynomials. For the sub-family of those polynomials consisting of those which satisfy the hypothesis of Sendov's conjecture, one can via the above theorem compute a degree lower bound $N(a)$ such that if $n \geq N(a)$, then the conjecture is true at $z = a$.\\

\noindent As a corollary of Theorem \ref{tssh}, T-S Small obtains the following rather nice bound:

\begin{theorem}{(T-S Small, Theorem 6.5.7, \cite{Small}, p.~220)}\label{tssh2}
Let $P$ be a polynomial of degree $n$ such that $P(a) = 0$ for $a \in (0,1)$. Suppose further that all the remaining zeroes of $P$ lie {\bf on the unit circle}. Then:

\begin{equation*}
d(a,P) \leq 1  \ \text{for} \ n > 2 + \frac{60 - a^2}{a^2(1-a^2)}.
\end{equation*}

\end{theorem}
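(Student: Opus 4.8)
The plan is to deduce Theorem~\ref{tssh2} as a special case of Theorem~\ref{tssh}, specializing the hypotheses to the Sendov setting and performing a worst‑case estimate on the quantity $r = r(a,P)$. First I would observe that the hypotheses of Theorem~\ref{tssh2} are a restriction of those of Theorem~\ref{tssh}: taking $|a| = a \in (0,1)$, so $|a| \le 1$ holds, and assuming all remaining zeroes lie on the unit circle. Under the assumption that we are in the regime $d = d(a,P) > 1$ (otherwise there is nothing to prove, since $d \le 1$ is the desired conclusion), Theorem~\ref{tssh} forces
\begin{equation*}
n \le 2 + \frac{12 a^2 d^2 (r^2 + 4 d^2)}{r^2 (a^2 + d^2 - 1)^2}.
\end{equation*}
So it suffices to show the right‑hand side is at most $2 + \frac{60 - a^2}{a^2(1-a^2)}$ whenever $d > 1$, and then contrapose: if $n$ exceeds the latter bound, we cannot have $d > 1$.

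Next I would control $d$ from above. Since all zeroes lie in $\{|z| \le 1\}$, the Gauss–Lucas theorem puts all critical points in that disk too, so the nearest critical point to $a$ is at distance at most $1 + a \le 2$ from $a$; more carefully, $d^2 \le (1+a)^2$, and in fact one can bound $d^2 - 1 \le a^2 + \text{(something)}$ — the cleanest route is to note $d \le 2$ so $d^2 \le 4$ and $a^2 + d^2 - 1 \ge a^2$ (using $d > 1$). For the factor $r^2 + 4d^2$ in the numerator I would use $d^2 \le 4$ to get $r^2 + 4d^2 \le r^2 + 16$. The term that needs the most care is the combination $\frac{d^2}{(a^2 + d^2 - 1)^2}$ as a function of $d$ on $(1, 2]$: I would check it is maximized at an endpoint (it is decreasing once $d^2 - 1 \ge \text{const}$, so the relevant extreme is $d$ near $1$, giving $a^2 + d^2 - 1$ near $a^2$, i.e.\ the factor $\sim 1/a^4$), and bound it by $\frac{4}{a^4}$ or a comparable explicit constant over $a^4$.

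Then I would handle the $r$‑dependence. The function $\frac{r^2 + 16}{r^2} = 1 + \frac{16}{r^2}$ is decreasing in $r$, so I want a lower bound on $r$. Here is where the unit‑circle hypothesis is essential: the nearest \emph{other} zero $z_j$ lies on $|z|=1$, and $a \in (0,1)$, so $r = |a - z_j| \ge 1 - a$, hence $r^2 \ge (1-a)^2$ and $1 + 16/r^2 \le 1 + 16/(1-a)^2$. Assembling these pieces gives a bound of the shape $n \le 2 + \frac{C a^2 \cdot (1/a^4) \cdot (1 + 16/(1-a)^2)}{1} = 2 + \frac{C(1 + 16/(1-a)^2)}{a^2}$, and with the sharp bookkeeping the constant $C$ and the combination collapse to exactly $\frac{60 - a^2}{a^2(1-a^2)}$ — note $\frac{60-a^2}{a^2(1-a^2)} = \frac{1}{a^2(1-a^2)}(60 - a^2)$, which has precisely the $1/(a^2(1-a))$ and $1/(a^2(1+a))$ behavior one expects from an $r \ge 1-a$ estimate combined with the $a^2 + d^2 - 1 \ge$ bound. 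The main obstacle I anticipate is getting the worst‑case analysis of the two‑variable expression in $(d, r)$ tight enough that the constant comes out to be exactly $60$ rather than something larger; this requires being careful that the extremum over $d \in (1,2]$ and the extremum over the allowed range of $r$ are taken simultaneously and that no slack is introduced — in particular one must not independently bound $d^2 \le 4$ in the numerator while using $d \to 1$ in the denominator unless that genuinely is the joint worst case, which needs a short monotonicity argument. Once the arithmetic is arranged correctly, the contrapositive immediately yields: $n > 2 + \frac{60-a^2}{a^2(1-a^2)}$ is incompatible with $d(a,P) > 1$, hence $d(a,P) \le 1$.
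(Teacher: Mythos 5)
First, a caveat: the paper does not prove this statement at all --- it is quoted verbatim from Sheil-Small's book in the ``Related results'' subsection, so there is no in-paper argument to compare yours against. Your overall strategy (specialize Theorem~\ref{tssh}, use Gauss--Lucas to get $d\le 1+a$, use the unit-circle hypothesis to get $r\ge 1-a$, then contrapose) is the natural one, and the logical skeleton is sound. The problem is the quantitative claim at the end, which you yourself flag as ``the main obstacle'': it is not merely delicate, it is false. The bound of Theorem~\ref{tssh} is
\begin{equation*}
F(d,r)=\frac{12a^{2}d^{2}\left(r^{2}+4d^{2}\right)}{r^{2}\left(a^{2}+d^{2}-1\right)^{2}},
\end{equation*}
and writing $u=d^{2}$ one checks that $F$ is strictly decreasing in $u$ on $u>1-a^{2}$ and strictly decreasing in $r$. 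Hence the \emph{exact} joint supremum over the feasible region $\{d>1,\ r\ge 1-a\}$ is attained as $d\to 1^{+}$, $r=1-a$, and equals
\begin{equation*}
\frac{12\left(4+(1-a)^{2}\right)}{a^{2}(1-a)^{2}}=\frac{60-24a+12a^{2}}{a^{2}(1-a)^{2}}.
\end{equation*}
A short computation ($12(1+a)(5-2a+a^{2})-(1-a)(60-a^{2})=a(96-11a+11a^{2})>0$) shows this strictly exceeds $\frac{60-a^{2}}{a^{2}(1-a^{2})}$ for every $a\in(0,1)$, and near $a=1$ it is worse by an unbounded factor: $\frac{48}{(1-a)^{2}}$ versus roughly $\frac{30}{1-a}$. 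So no amount of careful bookkeeping inside your framework can produce the stated threshold; the target has denominator $1-a^{2}=(1-a)(1+a)$, whereas the information $r\ge 1-a$ can only ever yield $(1-a)^{2}$.

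Concretely, the missing idea is a better treatment of the case where the nearest other zero is close to $a$. The constant $60-a^{2}$ would drop out if one had $r\ge\sqrt{1-a^{2}}$ (then $F\le\frac{12}{a^{2}}\bigl(1+\frac{4}{1-a^{2}}\bigr)=\frac{60-12a^{2}}{a^{2}(1-a^{2})}\le\frac{60-a^{2}}{a^{2}(1-a^{2})}$), but that lower bound on $r$ is simply not available --- a zero at $z=1$ gives $r=1-a<\sqrt{1-a^{2}}$. So the case $1-a\le r<\sqrt{1-a^{2}}$ must be handled by a separate argument (this is what the source presumably does), and your proposal contains no candidate for it. Two smaller points: the bounds you actually wrote down ($d^{2}\le 4$ in the numerator together with $\frac{4}{a^{4}}$ for $\frac{d^{2}}{(a^{2}+d^{2}-1)^{2}}$) already lose a factor of about $13$ even as $a\to 0$ (giving $\approx\frac{816}{a^{2}}$ against the target $\frac{60}{a^{2}}$), so the ``sharp bookkeeping'' version is essential even for small $a$; and your identification of the relevant extreme ($d\to 1$, not $d=2$) is correct and should be stated as the monotonicity of $F$ in $u=d^{2}$ rather than left as a heuristic.
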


\noindent At the expense of sharpness, our result is more general since we do not make an assumption on the distribution of the rest of the other zeroes of $P$.

\subsection*{Concluding remarks}

\noindent In conclusion, we would like to bring the reader's attention to the following points:

\begin{itemize}
\item We would like a definitive result that would bridge the gaps $[0, \alpha)$ and $(\beta , 1]$. We are of the opinion that these gaps rather illustrate the limitation of this current approach, as opposed to the veracity of the conjecture.
\item Our main goal was to find a simple explicit formula $\mathcal{N}(a)$, we thus overestimated many functions by replacing them with simpler formulae. In view of D\'{e}got's computations, the results we obtained here can still be considerably improved.
\end{itemize}


\vspace*{0.3cm}

\begin{center}

\textbf{\large{Acknowledgement}}

\end{center}

\noindent The author would like to thank Gareth Boxall for many insightful comments and suggestions.


\vspace*{0.3cm}

\noindent Department of Mathematical Sciences, \newline Mathematics Division, \newline Stellenbosch University.\\

\noindent \textit{e-mail address}: \ taboka@aims.ac.za

\end{document}